\def\BibTeX{{\rm B\kern-.05em{\sc i\kern-.025em b}\kern-.08em
    T\kern-.1667em\lower.7ex\hbox{E}\kern-.125emX}}
\newcommand{\setR}{\mathbb{R}}
\newcommand{\setN}{\mathbb{N}}
\newcommand{\setRp}{\setR_{\geq 0}}
\newcommand{\mc}[1]{\mathcal{#1}}
\newcommand{\ones}{\boldsymbol{1}}
\newcommand{\abs}[1]{\left| #1 \right|}
\newcommand{\norm}[1]{\lVert #1 \rVert}
\newcommand{\proj}[1]{\mathbb{P}_{#1}}
\newcommand{\indic}[1]{\mc{I}_{#1}}
\newcommand{\prob}{\mathbb{P}}
\newcommand{\expect}{\mathbb{E}}
\newcommand{\kron}{\otimes}
\DeclareMathOperator*{\argmin}{argmin}
\DeclareMathOperator*{\argmax}{argmax}
\DeclareMathOperator{\ncone}{N}
\DeclareMathOperator{\prox}{prox}
\DeclareMathOperator{\dom}{dom}
\DeclareMathOperator{\ran}{ran}
\DeclareMathOperator{\id}{Id}
\DeclareMathOperator{\zer}{zer}
\DeclareMathOperator{\rint}{ri}
\newtheorem{proposition}{Proposition}
\newtheorem{assumption}{Assumption}
\newtheorem{stassumption}{Standing Assumption}
\newtheorem{lemma}{Lemma}
\newcounter{algorithm}
\newenvironment{algorithm}[1][]
{	\refstepcounter{algorithm}
	\begin{minipage}{\linewidth}
		\medskip
		\hrule
		\smallskip
		\textsc{Algorithm \thealgorithm}. #1
		\smallskip
		\hrule 
		\smallskip
	\end{minipage}
}
{
	\smallskip
	\hrule width\linewidth\relax
	\smallskip
}
\newcommand{\lcm}{E}
\newcommand{\co}{\lcm_0}
\newcommand{\bo}{b_0}
\newcommand{\bp}{b_{\textrm{p}}}
\newcommand{\gs}{w}
\begin{document}
\title{Operator Splitting for Convex Constrained Markov Decision Processes}
\author{Panagiotis D.\ Grontas, Anastasios Tsiamis, John Lygeros
\thanks{
This work was supported as a part of NCCR Automation, a National Centre of Competence in Research, funded by the Swiss National Science Foundation (grant number 51NF40\_225155)}
\thanks{Authors are with the Automatic Control Laboratory, Department of Electrical Engineering and Information Technology,
        ETH Z\"urich, Physikstrasse 3 8092 Z\"urich, Switzerland. (e-mail:{\tt \{pgrontas, atsiamis, jlygeros\}@ethz.ch}). }
}

\maketitle

\begin{abstract}
  We consider finite Markov decision processes (MDPs) with convex constraints and known dynamics.
  In principle, this problem is amenable to off-the-shelf convex optimization solvers, but typically
  this approach suffers from poor scalability.
  In this work, we develop a first-order  algorithm, based on the Douglas-Rachford splitting,
  that allows us to decompose the dynamics and constraints.
  Thanks to this decoupling, we can incorporate a wide variety of convex constraints.
  Our scheme consists of simple and easy-to-implement updates that
  alternate between solving a regularized MDP and a projection.
  The inherent presence of regularized updates ensures last-iterate convergence, numerical stability,
  and, contrary to existing approaches, does not require us to regularize the problem explicitly.
  If the constraints are not attainable, we exploit salient properties of the Douglas-Rachord algorithm
  to detect infeasibility and compute a policy that minimally violates the constraints.
  We demonstrate the performance of our algorithm on two benchmark problems and show
  that it compares favorably to competing approaches.
\end{abstract}

\begin{IEEEkeywords}
Constrained Markov Decision Process, Optimization Algorithms, Operator Splitting, Infeasibility Detection
\end{IEEEkeywords}

\section{Introduction}
\label{sec:introduction}
\IEEEPARstart{M}{arkov} Decision Processes (MDPs) model an agent that seeks to act optimally in its environment in terms of a scalar cost signal.
This formulation of stochastic optimal control has enjoyed considerable success, all the way from games \cite{silver2017mastering} to fusion reactor
control \cite{degrave2022magnetic}, and is also
the mathematical formalism behind Reinforcement Learning (RL) \cite{sutton2018reinforcement}.
Yet, real-world problems often involve multiple conflicting specifications which can be challenging to incorporate
in a single cost function, e.g., as a weighted sum of task-specific costs.
Such problems can naturally be cast in the framework of constrained MDPs (CMDPs) \cite{altman2021constrained},
by prescribing auxiliary specifications as constraints.
Applications of CMDPs include finance \cite{tamar2012policy, chow2018risk}, power grids \cite{koutsopoulos2011control}
and robotic locomotion \cite{achiam2017constrained}, among others.

Arguably, the most studied constraints are the ones that impose an upper bound on the value function of auxiliary costs,
and correspond to linear constraints in the occupancy measure space.
When the CMDP has finite states and actions, and its dynamics and costs are perfectly known, an optimal policy can be computed by solving a linear program (LP)
\cite{altman2021constrained}.
LP-based approaches find limited applicability as they involve a large number of constraints, which render them computationally challenging
even for small state-action spaces \cite[Sec.~4.7]{sutton2018reinforcement}.
In this vein, \cite{chen2004dynamic, piunovskiy2006dynamic} cast CMDPs as unconstrained MDPs by augmenting the state with continuous variables that represent cumulative auxiliary costs.
The resulting MDP can in principle be solved using dynamic programming (DP) techniques,
but the presence of continuous states renders tabular methods effectively intractable.
Differently, the authors in \cite{khairy2020gradientaware} efficiently solve the dual formulation of CMDPs by exploiting geometric insights,
but their algorithm is tailored to a single linear constraint.

In the RL setting where the CMDP is unknown, a major line of research focuses on primal-dual methods to handle linear constraints
\cite{ding2020natural,achiam2017constrained, liu2021learning}.
The key observation is that, minimizing the Lagrangian for a fixed value of the multipliers associated with the linear constraints
is equivalent to solving an MDP with modified cost \cite{calvofullana2023state}.
Then, optimal multipliers can be computed via dual methods.
Nonetheless reconstructing a primal solution, i.e., an occupancy measure or policy, from a dual solution is generally no
easier than solving the original problem \cite{szepesvari2023constrained} since the Lagrangian is not strictly convex.
Some ways to circumvent this include performing primal averaging \cite{anstreicher2009two}, and
augmenting the MDP state with the multipliers and deploying dual ascent during system operation \cite{calvofullana2023state}.
Another approach is to add strongly convex regularization to the Lagrangian.
For instance, in \cite{gladin2023algorithm} the authors consider a discounted entropy regularization on the policy, 
while in \cite{li2021faster} an additional quadratic regularization of the dual variables is considered.
Since this method solves a modified problem, it is not trivial to ensure that the modified solution is close to the original one.
Further, many safe RL algorithms only provide average-iterate convergence guarantees, i.e., one needs to average
the primal solution over the entire algorithm trajectory.
This is an undesirable property, especially since primal solutions are typically represented via non-linear functions, e.g., neural networks,
thus precluding a straightforward averaging of parameters.
In \cite{muller2023cancellation} an augmented Lagrangian method is employed to achieve last-iterate convergence.
A regularized primal-dual approach based on policy gradient methods is developed in \cite{ding2024lastiterate}, 
along with non-asymptotic last-iterate convergence guarantees.
A host of other approaches exist that are inspired from optimistic mirror descent \cite{moskovitz2023reload}, interior-point methods \cite{liu2020ipo}, or
projection methods \cite{yang2020projection}.

All aforementioned methodologies are tailored to linearly-constrained MDPs.
Yet, MDP formulations with a generic convex objective and constraints have recently gained traction in the RL community \cite{zahavy2021reward}.
For instance, such formulations pertain to apprenticeship learning \cite{abbeel2004apprenticeship},
exploration \cite{hazan2019provably} and diverse skill recovery \cite{eysenbach2018diversity}.
From an algorithmic point of view, the setting of convex constraints is significantly more challenging than linear ones due to the
lack of a straightforward Lagrangian-based reformulation.
In \cite{ying2023policy}, a primal-dual gradient method for convex-constrained MDPs is developed, that exploits the
variational policy gradient theorem \cite{zhang2020variational}.
The authors in \cite{miryoosefi2019reinforcement} use game-theoretic tools to compute feasible policies for convex CMDPs.
The proposed algorithm can also be used to solve convex CMDPs via multiple calls to a feasibility problem \cite{miryoosefi2022simple}.

In this work, we consider finite and known MDPs with convex constraints.
Although this problem setting is amenable to convex optimization solvers, it suffers from the same limitations 
as LP approaches for linearly-constrained MDPs.
To address this problem, we propose a first-order method derived from the 
Douglas-Rachford Algorithm (DRA) \cite[Sec.~28.3]{bauschke_convex_2017} which is equivalent to
the alternating direction method of multipliers \cite{boyd2011distributed, moursi2019note}.
Our contributions are:
\begin{itemize}
    \item To apply the DRA, we introduce a convenient decomposition of the MDP dynamics and the constraints.
    This allows us to exploit potential structure in the constraints and reduce the computational footprint.
    Further, we can deploy our scheme to a wide range of convex constraints, namely any set for which an approximate
    projection can be computed.
    \item Our algorithm involves regularized updates that enjoy numerically favorable properties, without having
    to regularize the original CMDP or its Lagrangian.
    This inherent regularization facilitates recovering a primal solution and, thus,
    ensures last-iterate convergence.
    \item We study problem settings where the specified convex constraints are not attainable, and
    we exploit salient properties of the DRA to endow our approach with an infeasibility detection mechanism
    and meaningful characterization of its iterates.
    To the best of our knowledge, there are limited works that address infeasible CMDPs, see e.g., 
    \cite{miryoosefi2019reinforcement,       miryoosefi2022simple, yu2021provably}.
\end{itemize}
Finally, we deploy our algorithm on two well-studied benchmark problems. 
We verify its ability to rapidly retrieve medium-accuracy solutions, outperform popular competing methodologies,
and we motivate the usefulness of our infeasibility detection analysis with intuitive numerical examples.

\subsubsection*{Notation}
We denote by \( \setN, \setR,\text{ and } \setRp \) the set of natural, real, and non-negative real numbers, respectively.
We use \( \norm{\cdot} \) to denote the Euclidean norm.
We let \( \ones_n \in \setR^n \) be the vector of all ones.
For a finite set \(\mc{S}\) with cardinality \( \abs{\mc{S}} \), 
we denote the unit simplex by \( \Delta(\mc{S}) \coloneqq \{ x \in \setRp^{\abs{\mc{S}}} \,|\, \ones_{\abs{\mc{S}}}^{\top} x = 1 \} \).
The indicator function of a set \(\mc{X}\subseteq \setR^n\) is \(\indic{\mc{X}}(x) = 0 \) if \( x \in \mc{X} \),
and equals \( + \infty \) otherwise.
A function \( f : \setR^n \to \setR \cup \{ -\infty, +\infty \}\) is called proper
if \( \dom f \coloneqq \{ x \in \setR^n \,|\, f(x) < +\infty \} \neq \emptyset \),
and \( f \) does not take the value \( - \infty \).
The proximal operator of a proper, closed, convex function \(f\) with parameter
\(\sigma > 0 \) is given by \( \prox_{\sigma f}(x) \coloneqq \argmin_{y} \{ f(y) + \frac{1}{2 \sigma} \norm{x - y}^2 \} \).

\section{Preliminaries and Problem Formulation}
\subsection{Markov Decision Processes}
We consider infinite-horizon discounted tabular MDPs, specified by the tuple
\( (\mc{S}, \mc{A}, P, c, \gamma, \rho) \), 
where \( \mc{S} \) and \( \mc{A} \) are the finite sets of states and actions with respective
cardinality \(S \coloneqq \abs{\mc{S}} \) and \( A \coloneqq \abs{\mc{A}} \),
\( P : \mc{S} \times \mc{A} \to \Delta(\mc{S}) \) is the transition kernel,
\( c : \mc{S} \times \mc{A} \to \setR \) is the cost function,
\( \gamma \in (0, 1) \) is the discount factor, and
\( \rho \in \Delta(\mc{S}) \) is the initial state distribution.
We represent \( P \in \setR^{S A \times S} \) as a row-stochastic matrix,
and \( c \in \setR^{SA}, \rho \in \Delta(\setR^S) \) as vectors,
and we enumerate state-action pairs lexicographically as \( (s_1, a_1), (s_2, a_1), \) and so on.
At each time step \( t \), the agent is at state \( s_t \) and chooses action \( a_t \).
As a result, it transitions to a next state \( s_{t+1} \) with probability \( P(s_{t+1} \, | \, s_t, a_t) \)
and incurs a cost \( c(s_t, a_t) \).
The agent's goal is to minimize the cumulative discounted cost, assuming that the initial state is distributed according to \( \rho \).

A stationary Markov policy is a mapping \( \pi : \mc{S} \to \Delta(\mc{A}) \) such that
the probability of applying action \( a \) in state  \(s \) is \( \pi(a \,|\, s) \).
The value function of a policy \( \pi \) associated with the cost \( c \) starting from the initial state \( s \)
is \( V_{\pi}^c(s) \coloneqq (1-\gamma) \expect^{\pi} \big[ \sum_{t=0}^{\infty} \gamma^t c(s_t, a_t) \,|\, s_0 = s \big]  \), 
where the expectation is taken with respect to system trajectories under \( \pi \).
For conciseness, we let \( V_{\pi}^c \coloneqq \expect_{s \sim \rho}[ V_{\pi}^c(s)]  \).
Solving an MDP amounts to finding a policy \( \pi^{\star} \) such that
\begin{equation} \label{eq:MDP_with_policies}
    \pi^{\star} \in \argmin_{\pi \in \Pi} V_{\pi}^c,
\end{equation}
where \( \Pi \) denotes the set of all stationary Markov policies.

The non-convex optimal control problem \eqref{eq:MDP_with_policies} can equivalently be cast
as a convex one by introducing the (normalized discounted) occupancy measure of \( \pi \),
which is defined as \( d_{\pi}(s, a) \coloneqq (1 - \gamma) \sum_{t = 0}^{\infty} \gamma^t
\prob^{\pi}[s_t = s, a_t = a \,|\, s_0 \sim \rho ]\).
Intuitively, \( d_{\pi}(s, a) \) measures the discounted frequency of observing the pair \( (s,a) \).
Any \( d_{\pi} \) takes values in the bounded polytope
\begin{equation} \label{eq:occupancy_measure_set}
    \begin{aligned}
        \mc{D} \coloneqq \Big\{ d \in \setRp^{SA} ~|~ & \sum_{a} d(s, a) =
        (1 - \gamma) \rho(s) \\ &+ \gamma \sum_{a', s'} P(s \,|\, s', a') d(s', a')  
        \Big\} ~
        ;        
    \end{aligned}
\end{equation}
indeed, \( \mc{D} = \{ d_{\pi} \,|\, \pi \in \Pi \} \).
Conversely, given any \( d \in \mc{D} \), the policy \( \pi(a \,|\, s) = d(s,a) / \sum_{a} d(s, a) \) induces an occupancy
measure \(d_{\pi} \) such that \( d_{\pi} = d \) \cite[Th.~3.2]{altman2021constrained};
if \( \sum_{a} d(s, a) = 0\), then \( \pi(\cdot \,|\, s) \) can be arbitrary.
By definition of \( d_{\pi} \), \( V_{\pi}^c = c^{\top} d_{\pi} \),
which allows us to reformulate \eqref{eq:MDP_with_policies} as the following LP:
	\begin{subequations} \label{eq:occupancy_mdp}
		\begin{alignat}{2}
		&\underset{ d \in \setR^{SA}}{\mathclap{\mathrm{minimize}}} 
		\quad~ && \quad c^{\top} d \\
		& \overset{\hphantom{ d \in \setR^{SA}}}{\mathclap{\mathrm{subject~to}}} \quad~
		&& \quad d \in \mc{D}~ .
		\end{alignat}
	\end{subequations}
 
\subsection{Convex-Constrained Markov Decision Processes}
We consider convex-constrained MDPs specified as the following convex program:
\begin{subequations} \label{eq:occupancy_cmdp}
    \begin{alignat}{2}
    &\underset{ d \in \setR^{SA}}{\mathclap{\mathrm{minimize}}} 
    \quad~ && \quad c^{\top} d \\
    & \overset{\hphantom{d \in \setR^{SA}}}{\mathclap{\mathrm{subject~to}}} \quad~
    && \quad d \in \mc{C}\cap\mc{D},
    \end{alignat}
\end{subequations}
where the set \( \mc{C} \) satisfies the following standing assumption, 
which is valid throughout the paper.
\begin{stassumption} \label{stassumption:setc}
    The set \( \mc{C} \subseteq \setR^{SA} \) is non-empty, closed, and convex.
\end{stassumption}

For concreteness, we adopt a representation of the form
\( \mc{C} = \{ d \in \setR^{SA} \, | \, C_i(d) \leq b_i, ~ i = 1, \ldots, n_c \}  \),
where each \( C_i \) is a convex real-valued function and \( b_i \) is a scalar.
Depending on the structure of \( \mc{C} \), we can prescribe various
performance or safety specifications, as discussed next.

\textbf{Example 1:}
Arguably, the most common setting corresponds to polytopic sets of the form
\( \mc{C} = \{ d \in \setR^{SA} \,|\, \lcm d \leq b \} \),
where \( \lcm \in \setR^{n_c \times SA}, b \in \setR^{n_c} \).
We let \( \lcm_i \in \setR^{SA} \) denote the rows of \( \lcm \), and 
observe that \( \expect_{s \sim \rho}[V^{\lcm_i}_\pi(s)] =  \lcm_i^{\top} d_\pi \leq b_i \).
Therefore, we can interpret \( \lcm_i \) as an auxiliary cost, whose associated value function
is upper-bounded by \( b_i \).

\indent\textbf{Example 2:}
We can encode imitation learning objectives via \( \ell_p \)-ball constraints such as \( \mc{C} = \{ d \in \setR^{SA} \,|\, \norm{d - \hat{d}}_p \leq \varepsilon \} \),
where \( \varepsilon > 0 \), \( p \in [1, +\infty] \), and \(\hat{d} \in \mc{D} \).
Intuitively, we require the optimal occupancy measure to remain close to some expert measure \( \hat{d} \). 
Such constrains could also arise in a setting where we have a nominal MDP model that is only reliable in
the neighborhood of a nominal policy.

\textbf{Example 3:}
Using the convex set \( \mc{C} = \{ d \in \setR^{SA} \,|\, \sum_{s,a} d(s,a) \log(d(s,a)) \geq \varepsilon \} \) 
involving the entropy of the occupancy measure
we can promote exploratory behavior, e.g., when using a nominal MDP model to obtain new data. 

\textbf{Example 4:} Non-linear convex constraints can be used to ensure robustness of the induced policy.
For example, assume that \( c \) is only an approximation of the true cost function, which is unknown but guaranteed to live on the convex set \( \mc{A} \).
Then, we can control the worst-case performance degradation due to the uncertain cost by enforcing
\( d \in \mc{C} = \{ d \in \setR^{SA} \,|\, \sup_{c' \in \mc{A}} (c' - c)^{\top} d \leq \varepsilon \} \).

For various types of constraints \( \mc{C} \), one could solve \eqref{eq:occupancy_cmdp} as a convex program using existing solvers,
e.g., Example 1 and Example 2, with \( p = 1\) or \( p = +\infty\), give rise to an LP,
while Example 2 with \( p = 2\) corresponds to a second-order cone program.
But, as noted in the introduction, these approaches are significantly less scalable than DP-based methods.
The latter are solely applicable to linearly-constrained MDPs and come with their own set of challenges,
namely state augmentation with continuous variables or average-iterate convergence.
To bridge this gap, we aim to derive a DP-inspired algorithm that works for generic convex constraints.

\section{Algorithm Design} \label{sec:algorithm_design}
Our main algorithmic idea is to decouple the dynamics of the problem in \( \mc{D} \) from the
constraints in \( \mc{C} \), and then exploit the individual structure of the two parts to derive an efficient and modular algorithm.
To achieve this, we draw inspiration from distributed optimization techniques and, specifically, we deploy the
Douglas-Rachford Algorithm (DRA).
The DRA is used to solve composite optimization problems of the form
\begin{equation} \label{eq:composite_convex_optimization}
    \underset{\displaystyle x}{\mathclap{\mathrm{minimize}}} 
    \quad~ \quad f(x) + g(x),
\end{equation}
where \( f \) and \( g \) are convex, closed, and proper functions.
The iterations of the DRA require evaluating the proximal operators of \( f \) and \( g \).
Therefore, splitting the sum \( f + g \) onto individual functions \( f \) and \( g \) is a critical design choice for the efficiency of the algorithm.

We express the CMDP in \eqref{eq:occupancy_cmdp} in composite form as follows:
\begin{equation} \label{eq:composite_form_cmdp}
        \underset{\displaystyle d \in \setR^{SA}}{\mathclap{\mathrm{minimize}}} 
        \quad~ \underbrace{c^{\top} d + \indic{\mc{D}}(d)}_{\eqqcolon f(d)} + \underbrace{\indic{\mc{C}}(d)}_{\eqqcolon g(d)}~.
\end{equation}
Notice that our choice of \( f \) corresponds to the objective of an unconstrained MDP as in \eqref{eq:occupancy_mdp}.
A similar idea is explored in \cite{o2013splitting} in the context of optimal control of linear deterministic systems.
Applying the DRA for the proposed splitting yields the following iteration, initialized with \( \gs_0 \in \setR^{SA}\):
\begin{subequations} \label{eq:dra_for_cmdps}
    \begin{empheq}[left = (\forall k \in \setN) \quad \empheqlfloor]{align}
    d_{k} & = \prox_{\sigma f} (\gs_k) 
    \label{eq:dra_regularized_mdp} \\
    \nu_k & = \frac{1}{\sigma} (\gs_k - d_k)
    \label{eq:dra_dual_update}	\\
    {z}_{k} & = \prox_{\sigma g}(2 d_k - \gs_k) 
    \label{eq:dra_safety_projection} \\
    \gs_{k+1} & = \gs_k + (z_k - d_k)
    \label{eq:dra_governing_update}
    \end{empheq}
\end{subequations}
where \( \sigma > 0\) is a scaling parameter.
Intuitively, the algorithm proceeds by performing proximal minimization of \( f \) and \( g \) in an alternating fashion, and
uses \( \gs_k \) to integrate the difference of the minimizers.
In \autoref{sec:convergence_analysis} we will show that, under appropriate conditions, the pair of iterates \( (d_k, \nu_k) \)
converges to a primal-dual solution of \eqref{eq:occupancy_cmdp}.

Implementing \eqref{eq:dra_for_cmdps} is not straightforward as it requires solving the subproblems
\eqref{eq:dra_regularized_mdp} and \eqref{eq:dra_safety_projection}.
Next, we elaborate on how to perform these updates efficiently.
All derivations for the following subsections are given in Appendix \ref{app:derivation_of_qrpi}.

\subsection{Quadratically-regularized MDPs}
The update \eqref{eq:dra_regularized_mdp} corresponds to solving an MDP with quadratic regularization:
\begin{subequations} \label{eq:regularized_mdp}
    \begin{alignat}{2}
        &\underset{\displaystyle d \in \setR^{SA}}{\mathclap{\mathrm{minimize}}} 
        \quad~ && c^{\top} d + \frac{1}{2 \sigma}\norm{d - \gs_k}^2 \\
        & \overset{\hphantom{\displaystyle d \in \setR^{SA}}}{\mathclap{\mathrm{subject~to}}} \quad~
        && \Xi^{\top} d = (1 -\gamma) \rho + \gamma P^{\top} d,
        \label{eq:regularized_mdp_flow_constraint} \\
        & && d(s,a) \geq 0, ~ \forall s \in \mc{S}, a \in \mc{A}, \label{eq:regularized_mdp_positivity}
    \end{alignat}
\end{subequations}
where, for conciseness, we define the matrix \( \Xi^{\top} \coloneqq \ones_A^{\top} \kron I_S \in \setR^{S \times SA} \) that acts on \( d \)
as \( [\Xi^{\top} d]_s = \sum_{a} d(s, a) \).
Directly solving the quadratic program \eqref{eq:regularized_mdp} is disadvantegeous from a computational perspective
due to the large number of affine constraints.
Instead, we pursue an approach inspired from the regularized MDP literature \cite{neu2017unified, geist2019atheory} that exploits the dynamical system
interpretation of the constraints \eqref{eq:regularized_mdp_flow_constraint}, \eqref{eq:regularized_mdp_positivity}.

To do so, we introduce the Largrange multipliers \( V \in \setR^S \) and \( \varphi \in \setR^{SA} \)
associated with \eqref{eq:regularized_mdp_flow_constraint} and \eqref{eq:regularized_mdp_positivity}, respectively.
Then, we derive (see Appendix \ref{app:derivation_of_qrpi}) the dual of \eqref{eq:regularized_mdp} that reads:
\begin{equation} \label{eq:dual_regularized_mdp}
	\begin{aligned}
		&\underset{ V \in \setR^{S},\, \varphi \in \setRp^{SA}}{\mathclap{\mathrm{maximize}}} 
		\quad \kappa(V, \varphi):= - \frac{\sigma}{2} \norm{c + \gamma P V - \Xi V - \varphi}_2^2 
            \\
		& ~~~~~~
            + \gs_k^{\top}(c + \gamma P V - \Xi V - \varphi) + (1 - \gamma) \rho^{\top} V.
    \end{aligned}
\end{equation}
Recall that, in the primal and dual LP formulation of MDPs without regularization \cite[Subsec.~3.2]{altman2021constrained}, 
the multipliers of the equality constraints \eqref{eq:regularized_mdp_flow_constraint} correspond to the value function.
For this reason, in the context of \eqref{eq:regularized_mdp} we interpret \( V \) as a regularized value function,
similarly to \cite{neu2017unified}.
Further, complementary slackness dictates that \( 0 \leq \varphi \perp d \geq 0 \), hence, \( \varphi(s,a) > 0\) implies that \( d(s,a) = 0\)
for any state-action pair \( (s,a) \).
As such, \( \varphi \) acts as a dual occupancy measure that indicates which state-action pairs are never visited.

With these connections in mind, we observe that given an optimal \( \varphi^{\star} \), we can retrieve the corresponding optimal value function
\( V^{\star} \) by maximizing \eqref{eq:dual_regularized_mdp} with respect to \( V \).
Indeed, setting \( \nabla_V \kappa(V, \varphi^{\star}) \) to zero and solving for \(V\) yields a linear system of equations, akin to policy evaluation:
\begin{equation} \label{eq:regularized_value_optimality}
    (\gamma P - \Xi)^{\top} (\gamma P - \Xi) V =
		(\gamma P - \Xi)^{\top} (\gs_k/\sigma - c + \varphi^{\star}) + \frac{1}{\sigma} (1 - \gamma) \rho.
\end{equation}
Noticing that \( \gamma P - \Xi \) has trivial nullspace (see \autoref{lemma:trivial_nullspace} in Appendix \ref{app:derivation_of_qrpi}), we deduce that
\eqref{eq:regularized_value_optimality} has a unique solution.
Conversely, in a policy improvement fashion, given an optimal regularized value function \( V^{\star} \) we derive the corresponding
\( \varphi^{\star} \) by maximizing \eqref{eq:dual_regularized_mdp} as follows:
\begin{equation} \label{eq:dual_occupancy_optimality}
    \varphi^{\star} = \max(c - \Xi V^{\star}  + \gamma P V^{\star} - \gs_k/\sigma, 0).
\end{equation}
Finally, we can retrieve the solution of \eqref{eq:regularized_mdp} from the dual solution by exploiting strong duality:
\begin{equation} \label{eq:regularized_mdp_solution}
   d^{\star} = \sigma \max(- c + \Xi V^{\star} - \gamma P V^{\star} + \gs_k / \sigma, 0).
\end{equation}

The expressions in \eqref{eq:regularized_value_optimality}, \eqref{eq:dual_occupancy_optimality}, and \eqref{eq:regularized_mdp_solution} motivate solving
\eqref{eq:dual_regularized_mdp} through the following iterative scheme, which we refer to as quadratically regularized policy iteration (QRPI):
\begin{subequations} \label{eq:qrpi}
    \begin{align}
    \begin{split}
    V_{\ell+1}^{\textrm{in}} & \leftarrow ((\gamma P - \Xi)^{\top} (\gamma P - \Xi))^{-1} \\
    & ~~~\big((\gamma P - \Xi)^{\top} (\frac{1}{\sigma} \gs_k - 
    c + \varphi_{\ell}^{\textrm{in}}) + \frac{1}{\sigma} (1 - \gamma) \rho \big)
    \label{eq:inner_v_update}
    \end{split} \\
    \varphi_{\ell + 1}^{\textrm{in}} & \leftarrow \max(c + \gamma P V_{\ell + 1}^{\textrm{in}} - \Xi V_{\ell + 1}^{\textrm{in}} - \frac{1}{\sigma} \gs_k, 0)
    \label{eq:inner_varphi_update} \\
    d_{\ell+1}^{\textrm{in}} & \leftarrow \sigma \max(- c - \gamma P V_{\ell + 1}^{\textrm{in}} + \Xi V_{\ell + 1}^{\textrm{in}}  + \frac{1}{\sigma} \gs_k, 0)~.
    \label{eq:inner_d_update}
    \end{align}
\end{subequations}
As a remark, QRPI can be shown to be a quasi-Newton method to solve \eqref{eq:regularized_mdp}.
Similarly, in standard MDPs, policy iteration is known to be a Newton method to solve the Bellman optimality equation \cite{gargiani2022dynamic},
and has inspired quasi-Newton approaches as in \cite{kolarijani2023optimization}.

Notice that, QRPI forms an inner loop which is used to solve subproblem \eqref{eq:dra_regularized_mdp} of the DRA.
We highlight this by using a distinct index, \( \ell \in \setN \), and the superscript "\textrm{in}" 
for the inner loop iterates.
In \autoref{sec:convergence_analysis} we establish convergence of \eqref{eq:qrpi} to a primal-dual solution of \eqref{eq:regularized_mdp}.

Interestingly, notice that \eqref{eq:inner_varphi_update} and \eqref{eq:inner_d_update} can be expressed
as \( \varphi^{\textrm{in}}_{\ell+1} = \max(A(V^{\textrm{in}}_{\ell+1}), 0) \) 
and \( d^{\textrm{in}}_{\ell+1} = \sigma \max(-A(V^{\textrm{in}}_{\ell+1}), 0) \),
where \( A(V) \coloneqq c + \gamma P V - \Xi V - \gs_k / \sigma \).
Explicitly writing the components \( [A(V)]_{(s,a)} = c(s,a) + \gamma \sum_{s'} P(s' \,|\, s,a) V(s') - V(s) - \gs_k(s,a)/\sigma\),
we observe that \( A \) is a (dis)advantage function, encoding the dynamics and cost, plus a term arising from the quadratic regularization,
encoding a preference towards \( \gs_k \).
Further, we can decompose the positive and negative entries of \( A(V) \) onto \( \varphi \) and
\( d \), respectively, as indicated in \autoref{subfigure:decomposition_of_advantage}.

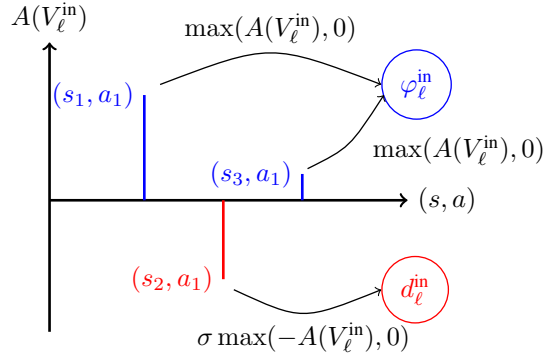
\begin{figure}[htbp]

        \centering
  	\begin{tikzpicture}[scale=0.70]
		\draw[->, line width=1] (-0.3,0) -- (6.5,0) node[right] {$(s, a)$};
		\draw[->, line width=1] (-0.3,-2.5) -- (-0.3,3) node[above] {$A(V_{\ell}^{\textrm{in}})$};
		
		
		\draw[blue, line width=1pt] (1.5,0) -- (1.5,2)
		node[left]{$(s_1, a_1)$};
		
		\draw[black, line width=0.5pt,->] (1.8,2.2) .. controls (3.9,3.0) ..
		node[midway, above] {$\max(A(V_{\ell}^{\textrm{in}}), 0)$} (6.0,2.2)
		node[right, blue, draw, circle, fill=white] {\(\varphi_{\ell}^{\textrm{in}}\)};
		
		\draw[blue, line width=1pt] (4.5,0) -- (4.5,0.5)
		node[left]{$(s_3, a_1)$};
		
		\draw[black, line width=0.5pt,->] (4.6,0.6) .. controls (5.4,1.0) ..
		node[pos=0.75, below right] {$\max(A(V_{\ell}^{\textrm{in}}), 0)$} (6.05,2.0);
		
		\draw[red, line width=1pt] (3.0,0) -- (3.0,-1.5)
		node[left]{$(s_2, a_1)$};
		
		\draw[line width=0.5pt,->] (3.2,-1.7) .. controls (4.5, -2.3) ..
		node[midway, below] {$\sigma \max(-A(V_{\ell}^{\textrm{in}}), 0)$} (6.0,-1.7) node[right, red, draw, circle, fill=white] {\( d_{\ell}^{\textrm{in}}\)};;
	\end{tikzpicture}
        \caption{Decomposition of \(A(V)\) onto \( d \) and \( \varphi \).
        Informally, if \( [A(V_{\ell}^{\text{in}})]_{(s,a)} < 0\) then playing action \( a \) at state \( s \) will improve performance, up to a slack
        of \( \gs_k/\sigma \), therefore, we use it to compute \( d_{\ell}^{\text{in}} \).
        Conversely, state-action pairs satisfying \( [A(V_{\ell}^{\text{in}})]_{(s,a)} > 0\) are undesirable as they would deteriorate performance, hence
        are placed in the dual occupancy \( \varphi_{\ell}^{\text{in}} \).
        }
        \label{subfigure:decomposition_of_advantage}
\end{figure}

\subsection{Constraint Projection}
We express the update \eqref{eq:dra_safety_projection} in a more familiar form, by recalling that \( g = \indic{\mc{C}}\), as follows:
\begin{equation}
    \label{eq:prox_g_is_projection}
    \begin{aligned}
        \prox_{\sigma g}(2 d_k - \gs_k) & = \argmin_{d' \in \mc{C}}~ \norm{d' - (2 d_k - \gs_k)}^2 \\ 
         & = \proj{\mc{C}}(2 d_k - \gs_k)~.
    \end{aligned}
\end{equation}
The projection onto a closed convex set is a well-studied operation and admits efficient, often closed-form, implementation for
various convex sets of interest, such as norm balls \cite[Examples 3.18, 29.27]{bauschke_convex_2017}, convex cones \cite[Examples  6.29, 29.32]{bauschke_convex_2017} or sublevel sets of support functions.

As previously highlighted, polyhedral constraints are of particular interest yet, in general, do not admit a closed-form projection.
Specifically, for polyhedral sets \( \mc{C} = \{ d \in \setR^{SA} \,|\, \lcm d \leq b \} \), 
where \( \lcm \in \setR^{n_c \times SA}\), the projection \eqref{eq:prox_g_is_projection}
amounts to solving a quadratic program.
Yet, the dimensionality of \( d \) renders solving \eqref{eq:prox_g_is_projection} at each iteration expensive.
Nonetheless, for most interesting problems it is often the case that \( n_c \ll S A \). 
Hence, it is advantageous to
solve the, significantly smaller, dual of \eqref{eq:prox_g_is_projection} instead:
\begin{equation}
    \lambda^{\star} \in \underset{\displaystyle \lambda \in \setRp^{n_c}}{\mathclap{\mathrm{argmax}}} 
    ~~  - \frac{1}{4} \lambda^{\top} \lcm \lcm^{\top} \lambda + (\lcm (2 d_k - \gs_k) - b)^{\top} \lambda,
\end{equation}
and retrieve the projection as \( \proj{\mc{C}}(2 d_k - \gs_k) = 2 d_k - \gs_k - \frac{1}{2} \lcm^{\top} \lambda^{\star} \).

\subsection{Overall Algorithm}
Combining the general DRA in \eqref{eq:dra_for_cmdps} with the specific update rules \eqref{eq:qrpi} and \eqref{eq:prox_g_is_projection}
we derive \autoref{alg:os-cmdp}, which we refer to as Operator Splitting for Constrained MDPs (OS-CMDP).
It comprises a double loop: the outer loop iteration, indexed by \( k \), implements the DRA, 
while the inner loop iteration, indexed by \( \ell \), (approximately) solves the regularized
MDP \eqref{eq:regularized_mdp} by performing \( \overline{\ell} \) iterations of QRPI \eqref{eq:qrpi}.
Design guidelines for the hyperparameters \( \overline{\ell}\) and \( \sigma \) are discussed in \autoref{sec:practical_implementation}.
Note that \autoref{alg:os-cmdp} keeps track of \( \varphi_k \), i.e., the optimal dual occupancy for
\eqref{eq:regularized_mdp} given \( \gs_k \). 
Then, we use \( \varphi_k\) to warm-start QRPI to solve \eqref{eq:regularized_mdp} for \(\gs_{k+1} \).
In the next section, we study the asymptotic behavior of \autoref{alg:os-cmdp} 
and establish appropriate termination criteria.

\begin{figure}
\begin{algorithm}[Operator Splitting for Constrained MDPs] \label{alg:os-cmdp}%
\textbf{Inputs:} \( \sigma > 0\), \(\overline{\ell} \in \setN \). \\
\textbf{Initialization:} \( k \leftarrow 0 \), \( \gs_0 \leftarrow 0 \), \( \varphi_{-1} \leftarrow 0 \). \\
\textbf{Repeat until terminal criterion:} \\
\(
\left \lfloor
\begin{array}{l}
    \text{Solve Regularized MDP via QRPI:} \\
    \left|
    \begin{array}{l}
        \textit{Initialization: } \ell \leftarrow 0 \\
        \textit{Warmstart: } \varphi_{0}^{\textrm{in}} = \varphi_{k-1} \\
        \textit{For \( \ell = 0, \ldots, \overline{\ell} - 1\):} \\
        \left \lfloor
        \begin{array}{l}
            V_{\ell+1}^{\textrm{in}}  \leftarrow \text{Update using } \eqref{eq:inner_v_update} \\
            \varphi_{\ell+1}^{\textrm{in}} \leftarrow \text{Update using } \eqref{eq:inner_varphi_update} \\
            d_{\ell+1}^{\textrm{in}} \leftarrow \text{Update using } \eqref{eq:inner_d_update} 
        \end{array}
        \right. \\[2em]
        \text{Update outer loop variables:} \\
        \varphi_{k} \leftarrow \varphi_{\overline{\ell}}^{\textrm{in}} \\
        d_{k} \leftarrow d_{\overline{\ell}}^{\textrm{in}}
    \end{array}
    \right. \\[.5em]
    \text{Dual update:} \\
        \left|
        \begin{array}{l}
            \nu_k \leftarrow \frac{1}{\sigma}(\gs_k - d_k)
        \end{array}
        \right. \\[.5em]
    \text{Project onto } \mc{C}\text{:} \\
        \left|
        \begin{array}{l}
            z_k \leftarrow \proj{\mc{C}}(2 d_k - \gs_k)
        \end{array}
        \right. \\
    \text{Auxiliary update:} \\
    \left|
    \begin{array}{l}
        \gs_{k+1} \leftarrow \gs_k + (z_k  -  d_k)
    \end{array}
    \right. \\[.3em]
    k \leftarrow k + 1
\end{array}
\right. \\[.3em]
\textbf{Output: } d_k
\)
\end{algorithm}
\end{figure}

\section{Convergence Analysis} \label{sec:convergence_analysis}
In this section, we establish convergence of QRPI, in the inner loop of \autoref{alg:os-cmdp}, to a primal-dual solution of \eqref{eq:regularized_mdp}.
Then, we study the asymptotic behavior of \autoref{alg:os-cmdp} for both feasible and infeasible instances of \eqref{eq:occupancy_cmdp},
which we employ to design termination criteria.
The proofs for the technical statements in this section are provided in Appendix \ref{app:convergence_proofs}.

\subsection{Convergence of QRPI}
Our convergence proof hinges on the fact that the dual updates \eqref{eq:inner_v_update}, and \eqref{eq:inner_varphi_update} of QRPI
can be viewed as applying block coordinate maximization to the dual problem \eqref{eq:dual_regularized_mdp}.
In particular, the dual updates are equivalent to:
\begin{subequations}
        \begin{empheq}[left = \forall \ell \in \setN: \empheqlfloor \label{eq:block_coordinate_ascent}]{align}
        V^{\ell+1} & = \argmax_V ~ \kappa(V, \varphi^{\ell}) \label{eq:V_update} \\
        \varphi^{\ell+1} & = \argmax_{\varphi \in \setRp^{SA}} ~ \kappa(V^{\ell+1}, \varphi), \label{eq:phi_update}
        \end{empheq}
\end{subequations}
where \( \kappa \) is the dual function in \eqref{eq:dual_regularized_mdp}.
The primal updates simply follow from the relation between primal and dual solutions established in \eqref{eq:regularized_mdp_solution}.
\begin{proposition} \label{prop:qrpi_convergence}
    For any \( \gs_k \in \setR^{SA} \) and \( \varphi_0 \in \setR^S \)
    the sequence \( (d_{\ell}^{\textrm{in}}, V_{\ell}^{\textrm{in}}, \varphi_{\ell}^{\textrm{in}})_{\ell \in \setN} \) generated by
    \eqref{eq:qrpi} converges to a primal-dual solution of \eqref{eq:regularized_mdp} with R-linear rate.
    In particular, \( d_{\ell}^{\textrm{in}} \to \prox_{\sigma f} (\gs_k) \).
\end{proposition}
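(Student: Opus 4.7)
The plan is to interpret the inner-loop updates \eqref{eq:qrpi} as an exact two-block coordinate maximization scheme applied to the dual \eqref{eq:dual_regularized_mdp}, establish R-linear convergence of the dual iterates, and then transfer this rate to the primal via strong duality.

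First, I would verify the equivalence of \eqref{eq:qrpi} with the coordinate-ascent scheme \eqref{eq:block_coordinate_ascent}. The function \( \kappa(V,\varphi) \) is a concave quadratic: by \autoref{lemma:trivial_nullspace}, \( (\gamma P-\Xi)^\top(\gamma P-\Xi) \) is positive definite, so \( \kappa(\cdot,\varphi) \) is strongly concave in \( V \) and the stationarity condition \( \nabla_V\kappa = 0 \) reproduces the linear system \eqref{eq:inner_v_update}. In the \( \varphi \)-block, \( \kappa \) is separable with Hessian \( -\sigma I \), so the unique maximizer over \( \varphi\ge 0 \) is the componentwise positive-part expression \eqref{eq:inner_varphi_update}. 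The primal update \eqref{eq:inner_d_update} then matches the strong-duality recovery \eqref{eq:regularized_mdp_solution}; strong duality holds because \eqref{eq:regularized_mdp} is a strongly convex program with linear constraints and a nonempty feasible set (the occupancy polytope).

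Second, I would establish R-linear convergence of the dual sequence. The crucial difficulty is that \( \kappa \) is \emph{not} jointly strongly concave: its Hessian has a nontrivial kernel, mirroring the well-known non-uniqueness of value-function / dual-occupancy pairs, so standard linear-rate results for two-block exact coordinate ascent do not apply directly. However, the dual problem is a convex quadratic program with polyhedral constraints, for which the Luo--Tseng error bound holds; combined with the fact that each block subproblem is strongly concave and is solved exactly, this bound yields R-linear convergence of \( (V_\ell^{\textrm{in}},\varphi_\ell^{\textrm{in}}) \) to the closed convex set of dual optima. Equivalently, eliminating \( V \) in closed form via \eqref{eq:regularized_value_optimality} reduces the iteration to \( \varphi^{\ell+1} = \max(\Pi\varphi^\ell + b,\,0) \), where \( \Pi \) is the orthogonal projector onto \( \ran(\gamma P-\Xi) \) and \( b \) depends on \( \gs_k \); a Hoffman-type argument applied to this projected affine recursion recovers the same rate.

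Finally, I would transfer the rate to the primal iterates. The recovery map \eqref{eq:regularized_mdp_solution} from the dual variables to \( d \) consists of an affine map followed by \( \max(\cdot,0) \), hence is globally Lipschitz with constant controlled by \( \sigma\|\gamma P-\Xi\| \). The R-linear rate on \( (V_\ell^{\textrm{in}},\varphi_\ell^{\textrm{in}}) \) therefore propagates to \( d_\ell^{\textrm{in}} \), whose limit must coincide with the unique minimizer \( \prox_{\sigma f}(\gs_k) \) of the strongly convex problem \eqref{eq:regularized_mdp}. The hard part, as noted, is extracting an R-linear rate in the absence of joint strong concavity; carefully tracking the error-bound constants through the primal recovery, while accommodating an arbitrary warm-start \( \varphi_0 \), will be the most delicate step.
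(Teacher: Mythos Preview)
Your proposal is correct and follows essentially the same route as the paper: both recognize the QRPI updates as exact two-block coordinate ascent on the concave quadratic dual \eqref{eq:dual_regularized_mdp} and obtain the R-linear rate via the Luo--Tseng machinery for coordinate descent on composite problems lacking strong convexity. The paper is slightly more economical in that it casts the dual directly in the form required by \cite[Th.~2.1]{Luo1992OnTC} (with \( g(\cdot)=\|c+\cdot\|^2 \) and \( E=[\gamma P-\Xi\ \ -I] \)), verifies the standing assumptions there, and invokes the theorem verbatim, leaving the Lipschitz transfer to \( d_\ell^{\textrm{in}} \) implicit; your explicit primal-recovery step and the alternative Hoffman-type reduction are sound but not needed once that reference is cited.
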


\subsection{Feasible Problem Instances}
Now, we turn our attention to the asymptotic behavior of \autoref{alg:os-cmdp} under the following assumption.
\begin{assumption} \label{assum:feasible_constraint_qualification}
    One of the following holds:
    (i) \(\mc{D} \cap \rint \mc{C} \neq \emptyset \), where \( \rint \) denotes the relative interior;
    (ii) \( \mc{C} \) is a polyhedron and \( \mc{D} \cap \mc{C} \neq \emptyset \).
\end{assumption}
Assumptions \ref{assum:feasible_constraint_qualification}\textit{(i)} and \ref{assum:feasible_constraint_qualification}\textit{(ii)} are
standard and, respectively, correspond to strict feasibility and feasibility.

If we allow full convergence of the inner QRPI loop, i.e., we set \( \overline{\ell} = + \infty \), then
we can readily establish convergence of \autoref{alg:os-cmdp} as an instance of the DRA.
\begin{proposition} \label{prop:nominal_exact_convergence}
    Under \autoref{assum:feasible_constraint_qualification},
    the sequence \( (d_k, z_k, \nu_k)_{k \in \setN}  \) generated by \autoref{alg:os-cmdp}, where the inner problem is solved to optimality,
    converges to some \( (d^{\star}, d^{\star}, \nu^{\star}) \), where \( d^{\star} \) is a primal and \( \nu^{\star} \) is a dual 
    solution of \eqref{eq:occupancy_cmdp}.
\end{proposition}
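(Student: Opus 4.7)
My plan is to recognize that, when the inner QRPI loop is iterated to full convergence, Algorithm 1 is exactly the Douglas--Rachford iteration \eqref{eq:dra_for_cmdps} applied to the composite problem \eqref{eq:composite_form_cmdp} with $f(d) = c^\top d + \indic{\mc{D}}(d)$ and $g(d) = \indic{\mc{C}}(d)$, and then to invoke the standard DRA convergence theory. By Proposition~\ref{prop:qrpi_convergence}, exact inner solves yield $d_k = \prox_{\sigma f}(\gs_k)$, so \eqref{eq:dra_regularized_mdp}--\eqref{eq:dra_governing_update} is literally the DRA for $f+g$.

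\textbf{Verification of DRA hypotheses.} First I would check that $f$ and $g$ are proper, closed, and convex. The set $\mc{D}$ in \eqref{eq:occupancy_measure_set} is a non-empty bounded polytope, since it is the image of the non-empty set of stationary policies under the occupancy map, so $\indic{\mc{D}}$ is proper, closed, convex, and $f$ inherits these properties. For $g$, Standing Assumption~\ref{stassumption:setc} directly supplies them. Next, I would argue that the set of primal minimizers is non-empty: under either branch of Assumption~\ref{assum:feasible_constraint_qualification}, the feasible set $\mc{D} \cap \mc{C}$ is non-empty, and $c^\top d$ attains its minimum over this non-empty closed set because $\mc{D}$ is bounded. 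Finally, under (i) strict feasibility gives $\dom f \cap \rint(\dom g) \neq \emptyset$, while under (ii) polyhedrality of both $\mc{D}$ and $\mc{C}$ triggers the polyhedral sum rule; in either case $\partial(f+g) = \partial f + \partial g$, which ensures $\zer(\partial f + \partial g) = \argmin(f+g) \neq \emptyset$.

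\textbf{Passing to the limit.} With the hypotheses verified, I would invoke the standard DRA convergence theorem (e.g., \cite[Thm.~28.3]{bauschke_convex_2017}), which in finite dimensions yields that $\gs_k$ converges to some fixed point $\bar{\gs}$. Continuity of $\prox_{\sigma f}$ then gives $d_k \to d^\star := \prox_{\sigma f}(\bar{\gs})$, and the fixed-point relation $\bar{\gs} = \bar{\gs} + (\prox_{\sigma g}(2 d^\star - \bar{\gs}) - d^\star)$ forces $z_k \to d^\star$. The dual iterate $\nu_k = (\gs_k - d_k)/\sigma$ then converges to $\nu^\star := (\bar{\gs} - d^\star)/\sigma$. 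To identify the limit as a primal-dual pair, I would use the characterization of the proximal operator: $\nu_k \in \partial f(d_k)$ and $-\nu_k + (\gs_k - 2 d_k + z_k)/\sigma \in \partial g(z_k)$; passing to the limit and using $z^\star = d^\star$ yields $\nu^\star \in \partial f(d^\star)$ and $-\nu^\star \in \partial g(d^\star)$, so $0 \in \partial f(d^\star) + \partial g(d^\star) = \partial(f+g)(d^\star)$, confirming primal optimality of $d^\star$ and identifying $\nu^\star$ as a Lagrange multiplier for the constraint $d \in \mc{C}$.

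\textbf{Main obstacle.} The only delicate point is establishing the subdifferential sum rule under both branches of Assumption~\ref{assum:feasible_constraint_qualification}; once this is in place, everything else is a direct application of well-established DRA theory together with a routine passage to the limit in the proximal optimality conditions. The polyhedral case (ii) is subtle because $\rint \mc{C}$ may be empty, so one must explicitly appeal to the polyhedral calculus rather than the standard Moreau--Rockafellar theorem.
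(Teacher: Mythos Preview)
Your proposal is correct and takes essentially the same route as the paper: both recognize that exact inner solves reduce Algorithm~\ref{alg:os-cmdp} to the DRA for \eqref{eq:composite_form_cmdp}, verify that $f,g$ are proper, closed, convex and that Assumption~\ref{assum:feasible_constraint_qualification} supplies the needed constraint qualification so that $0\in\zer(\partial f+\partial g)$, and then invoke the DRA convergence result \cite[Cor.~28.3]{bauschke_convex_2017}. Your write-up is simply more explicit about the passage to the limit and the identification of $(d^\star,\nu^\star)$ as a primal--dual pair; one cosmetic slip is the remark that ``$\rint\mc{C}$ may be empty''---a nonempty convex set in $\setR^{SA}$ always has nonempty relative interior, so what you mean is that $\mc{D}\cap\rint\mc{C}$ may be empty in the polyhedral case.
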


In practice, we observe that using a small finite \( \overline{\ell} \), instead of solving the inner problem to optimality, 
yields similar performance, as we discuss in \autoref{sec:practical_implementation} and Appendix \ref{appendix:inexact_qrpi}.

\subsection{Infeasible Problem Instances}
Next, we focus on cases where the constraint set \( \mathcal{C} \) is not compatible with the dynamics, 
i.e., \( \mc{C} \cap \mc{D} = \emptyset\).
This means that there exists no occupancy measure, or equivalently Markov policy, that satisfies the specifications
in \( \mathcal{C} \).
In this setting, it is well-known that 
the sequence \( ( \gs_k )_{k \in \setN} \) diverges, but,
nonetheless, we can extract useful information from the rate at it which it does so \cite{banjac2021minimal}.
\begin{proposition} \label{prop:infeasibility_governing_sequence}
    Let \( v \coloneqq \argmin_{\beta \in \mc{D} - \mc{C}} \norm{\beta} \). It holds true that
    \( \gs_{k} - \gs_{k+1} \to v \).
\end{proposition}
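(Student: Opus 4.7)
The strategy is to recast the outer loop of \autoref{alg:os-cmdp} (assuming the inner problem is solved to optimality, i.e., \(\overline{\ell}=+\infty\)) as the fixed-point iteration of a firmly nonexpansive operator, and then invoke the classical Baillon--Bruck/Pazy theory on the asymptotic displacement of such iterations when no fixed point exists. Defining
\( T\gs \coloneqq \gs + \prox_{\sigma g}(2\prox_{\sigma f}(\gs) - \gs) - \prox_{\sigma f}(\gs), \)
the update \eqref{eq:dra_governing_update} reads \(\gs_{k+1} = T\gs_k\), and \(T\) is firmly nonexpansive by standard Douglas--Rachford theory \cite[Prop.~28.8]{bauschke_convex_2017}. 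A direct computation yields \((\id - T)\gs_k = \prox_{\sigma f}(\gs_k) - \prox_{\sigma g}(2\prox_{\sigma f}(\gs_k) - \gs_k) = d_k - z_k = \gs_k - \gs_{k+1}\), and this displacement always lies in \(\mc{D} - \mc{C}\), since \(d_k = \proj{\mc{D}}(\gs_k - \sigma c) \in \mc{D}\) and \(z_k = \proj{\mc{C}}(2d_k - \gs_k) \in \mc{C}\).

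First I would observe that \(\mc{D}\cap\mc{C} = \emptyset\) forces \(\fix T = \emptyset\): any fixed point \(\gs^\star\) of \(T\) would satisfy \(\prox_{\sigma f}(\gs^\star) = \prox_{\sigma g}(2\prox_{\sigma f}(\gs^\star) - \gs^\star) \in \mc{D}\cap\mc{C}\), a contradiction. Applying the asymptotic-displacement result for firmly nonexpansive operators without fixed points, in the form used in \cite{banjac2021minimal}, then yields \((\id - T)\gs_k \to v^\star\) in norm, where \(v^\star\) is the (unique) minimum-norm element of \(\overline{\ran(\id - T)}\). Hence it suffices to identify \(v^\star\) with \(v\).

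The inclusion \(\ran(\id - T) \subseteq \mc{D} - \mc{C}\) is already established above. Since \(\mc{D}\) is a bounded polytope and \(\mc{C}\) is closed and convex, \(\mc{D} - \mc{C}\) is itself closed, so \(v^\star \in \mc{D} - \mc{C}\) and \(\norm{v^\star} \geq \norm{v}\). For the matching reverse inequality I would invoke the structural identity \(\overline{\ran(\id - T)} = \overline{\dom\partial f - \dom\partial g}\) for Douglas--Rachford operators built from proper lsc convex functions (a consequence of the range analysis in \cite{banjac2021minimal} and \cite[Sec.~28]{bauschke_convex_2017}); since \(\dom\partial f = \mc{D}\) and \(\dom\partial g = \mc{C}\) in our setting, this gives \(v^\star = \proj{\mc{D} - \mc{C}}(0) = v\), completing the proof.

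The main obstacle is this last identification, specifically the reverse inclusion \(\mc{D} - \mc{C} \subseteq \overline{\ran(\id - T)}\): the forward inclusion is transparent from the explicit form of the proximal maps, but the reverse one is a nontrivial structural fact about the range of the Douglas--Rachford operator. The presence of the linear term \(c^\top d\) in \(f\) is inconsequential, as it merely shifts \(\prox_{\sigma f}\) by \(-\sigma c\) before the projection onto \(\mc{D}\) and leaves \(\dom\partial f = \mc{D}\) unchanged, so the argument proceeds as in the classical indicator-function case.
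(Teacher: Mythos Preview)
Your high-level strategy coincides with the paper's: both invoke the asymptotic-displacement theory of \cite{banjac2021minimal} for the firmly nonexpansive DRA operator \(T\), and both reduce the claim to identifying the minimum-norm element of \(\overline{\ran(\id - T)}\) with the point of \(\mc{D} - \mc{C}\) nearest the origin. The forward inclusion \(\ran(\id - T) \subseteq \mc{D} - \mc{C}\) and the closedness of \(\mc{D} - \mc{C}\) are handled correctly.

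The gap is in the reverse direction. The ``structural identity'' \(\overline{\ran(\id - T)} = \overline{\dom\partial f - \dom\partial g}\) that you invoke is \emph{false} for general proper lsc convex \(f,g\). A one-line counterexample: take \(f(x)=g(x)=x\) on \(\setR\). Then \(\dom\partial f - \dom\partial g = \setR\), yet \(\prox_{\sigma f}(\gs)=\gs-\sigma\), \(\prox_{\sigma g}(y)=y-\sigma\), and a direct computation gives \(T\gs=\gs-2\sigma\), so \(\ran(\id-T)=\{2\sigma\}\), a singleton. The range of \(\id-T\) for DRA is governed by \emph{both} a primal component \(\overline{\dom f - \dom g}\) and a dual component \(\overline{\dom f^{*} + \dom g^{*}}\); see Fact~2.1 and Proposition~3.2 in \cite{banjac2021minimal}. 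Your argument silently assumes the dual component is trivial, which is precisely what needs to be proved.

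The paper closes this gap by explicitly verifying that the dual part vanishes: since \(\mc{D}\) is a bounded polytope, \(\partial f\) has bounded domain, hence surjective range \cite[Cor.~21.25]{bauschke_convex_2017}, so \(\dom f^{*}=\setR^{SA}\); together with \(0\in\dom g^{*}\) (because \(\mc{C}\neq\emptyset\)) this yields \(0\in\dom f^{*}+\dom g^{*}\) and therefore \(v_D=\proj{\overline{\dom f^{*}+\dom g^{*}}}(0)=0\). Only then does the minimal displacement vector reduce to \(v_P=\proj{\mc{D}-\mc{C}}(0)\). Your remark that ``the linear term \(c^{\top}d\) is inconsequential'' is correct in spirit, but the decisive structural fact is the compactness of \(\mc{D}\), not merely that \(\dom\partial f=\mc{D}\).
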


The vector \( v \), that corresponds to the so-called minimal displacement vector \cite{bauschke2023douglas}, represents the minimum-norm
translation of \( \mc{C} \) such that \( \mc{C} \cap \mc{D} \neq \emptyset \).
Intuitively, a system designer can interpret \( v \) as the direction along which the constraints should be
relaxed to render the problem feasible.
Moreover, it follows that \( v \neq 0 \) if and only if 
\( \mc{C} \cap \mc{D} = \emptyset \).
In fact, it can be shown that if the problem is infeasible the non-zero \( v \)  generates a strongly separating hyperplane
between \( \mc{C} \) and \( \mc{D} \), i.e.,
\( \min_{d \in \mc{D}} v^{\top} d > \sup_{z \in \mc{C}} v^{\top} z \)
(see \autoref{lemma:strongly_separating_hyperplane} in Appendix \ref{app:convergence_proofs}). 
Similar results have been shown in the context of quadratic and conic programs \cite{raghunathan2014infeasibility, liu2017new, banjac2019infeasibility}.

Despite infeasibility, we can still extract a meaningful policy from the limit of \( (d_k )_{k \in \setN} \) by exploiting
the asymptotic properties of the DRA established in \cite{banjac2021minimal,bauschke2023douglas}.
\begin{proposition} \label{prop:primal_shadow_infeasibility_properties}
    It holds true that \( d_{k+1} - d_k \to 0\) and any limit point \( (\overline{d}, \overline{z}) \) of \( (d_k, z_k)_{k \in \setN} \) satisfies \( (\overline{d}, \overline{z}) \in \argmin_{d \in \mc{D}, z \in \mc{C}} \norm{d - z} \).
    In addition, if \( \mc{C} \) is a polyhedron, then,
    \begin{equation} \label{eq:infeasibility_best_approximation}
	 \begin{alignedat}{2}
		d_k \to ~~  &\underset{d \in \setR^{SA}}{\mathclap{\mathrm{argmin}}} 
		\quad~ && \quad c^{\top} d
            \\
        & \overset{\hphantom{d \in \setR^{SA}}}
		{\mathclap{\mathrm{subject~to}}} 
		&& \quad d \in \mc{D} \cap( \mc{C} - v)~.
    \end{alignedat}
\end{equation}
\end{proposition}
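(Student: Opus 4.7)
The plan is to view Algorithm~\ref{alg:os-cmdp} with the inner QRPI driven to optimality ($\overline{\ell}=\infty$) as the exact Douglas--Rachford iteration $w_{k+1}=T(w_k)$ for the splitting in \eqref{eq:composite_form_cmdp}, where $T(w):=w+\proj_{\mc C}(2\prox_{\sigma f}(w)-w)-\prox_{\sigma f}(w)$. A useful observation is that $\prox_{\sigma f}(w)=\proj_{\mc D}(w-\sigma c)$, so the iteration is essentially DRA between two closed convex sets, with the linear term $c^\top d$ acting only as a fixed shift inside the first proximal map. Proposition~\ref{prop:infeasibility_governing_sequence} supplies the starting point $w_k-w_{k+1}\to v$; combined with the governing update \eqref{eq:dra_governing_update} this is equivalent to $d_k-z_k\to v$, a fact I use throughout. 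The rest of the argument invokes the asymptotic theory of inconsistent DRA developed in \cite{banjac2021minimal,bauschke2023douglas}.

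For the first claim, the cluster-point statement is short. Along any convergent subsequence $(d_{k_j},z_{k_j})\to(\bar d,\bar z)$, closedness of $\mc D$ and $\mc C$ yields $\bar d\in\mc D$ and $\bar z\in\mc C$; passing to the limit in $d_k-z_k\to v$ gives $\bar d-\bar z=v$, which, by the very definition of $v$ as the minimum-norm element of $\mc D-\mc C$, forces $(\bar d,\bar z)\in\argmin_{d\in\mc D,\,z\in\mc C}\|d-z\|$. The sharper statement $d_{k+1}-d_k\to 0$ rests on the structural result of \cite{bauschke2023douglas} that in the inconsistent regime the translated sequence $(w_k+kv)$ converges to some $\bar w$, a fixed point of the shifted operator $T_v:=T(\cdot)+v$. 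This upgrade is crucial: firm nonexpansiveness of $\prox_{\sigma f}$ alone only yields $\|d_{k+1}-d_k\|\leq\|w_{k+1}-w_k\|\to\|v\|$, which is too weak; the convergence of $(w_k+kv)$ additionally kills the component of $w_{k+1}-w_k$ transverse to the constant drift $-v$, and it is this transverse component that actually moves the shadow $d_k=\proj_{\mc D}(w_k-\sigma c)$, since $\mc D$ is bounded and the drift along $-v$ is absorbed by the projection onto its appropriate exposed face.

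For the polyhedral case, the idea is to recast \eqref{eq:infeasibility_best_approximation} as a \emph{consistent} composite program and apply standard DRA theory to it. When $\mc C$ is polyhedral, so is $\mc D-\mc C$; hence the minimum-norm $v\in\mc D-\mc C$ is attained and $\mc D\cap(\mc C-v)$ is non-empty. The shifted operator $T_v$ then coincides with the exact DRA operator for
\begin{equation*}
\min_{d\in\setR^{SA}}\; c^\top d+\indic{\mc D}(d)+\indic{\mc C-v}(d),
\end{equation*}
which is precisely \eqref{eq:infeasibility_best_approximation}. Convergence of $(w_k+kv)$ to $\bar w\in\fix T_v$, combined with the polyhedral regularity of $\mc D$ and $\mc C-v$, promotes the subsequential shadow limits to a full limit $d^\star=\prox_{\sigma f}(\bar w-v\cdot\infty)$, suitably interpreted, and the presence of the linear term $c^\top d$ inside $\prox_{\sigma f}$ selects $d^\star$ as the unique $c^\top d$-minimizer on $\mc D\cap(\mc C-v)$, yielding \eqref{eq:infeasibility_best_approximation}.

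The main obstacle, and the reason polyhedrality is needed, is the passage from the soft fact that $(w_k+kv)$ converges to the hard statement $d_k\to d^\star$ with $d^\star$ the specific linear-cost minimizer. General inconsistent DRA controls only cluster points of the shadow sequence; promoting this to full convergence and to the correct selection rule requires the polyhedral-regularity machinery of \cite{bauschke2023douglas}, whose hypotheses must be verified for the displaced pair $(f,\indic{\mc C-v})$. I expect this verification to be essentially routine once one notes that $\prox_{\sigma f}=\proj_{\mc D}(\cdot-\sigma c)$ is just a translated polyhedral projection, so that the Hoffman-type local error bounds on which the polyhedral DRA theory rests carry over verbatim.
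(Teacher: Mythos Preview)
Your cluster-point argument is correct and matches the paper's. The argument for $d_{k+1}-d_k\to 0$, however, has a genuine gap: you invoke a ``structural result of \cite{bauschke2023douglas} that the translated sequence $(w_k+kv)$ converges'', but no such general result is available; from $w_k-w_{k+1}\to v$ one only gets $(w_{k+1}+(k{+}1)v)-(w_k+kv)\to 0$, which does not imply convergence. Even granting it, the claim that ``the drift along $-v$ is absorbed by the projection onto its appropriate exposed face'' is a heuristic, not a proof, since $\proj{\mc D}(\cdot-\sigma c)$ does not commute with translation by $-v$. The paper proceeds differently: it uses the decomposition $v=v_P+v_D$ from \cite{banjac2021minimal} with $v_D=\proj{\overline{\dom f^*+\dom g^*}}(0)$, observes that compactness of $\mc D$ forces $\dom f^*=\setR^{SA}$ and hence $v_D=0$ (this was established in the proof of Proposition~\ref{prop:infeasibility_governing_sequence}), and then \cite[Th.~3.4]{banjac2021minimal} yields $d_{k+1}-d_k\to 0$ directly. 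The missing ingredient is precisely this $v_D=0$ fact, driven by compactness of $\mc D$; without it the primal shadow need not stabilize.

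The polyhedral part has a second gap. Your identification ``$T_v$ coincides with the DRA operator for the shifted problem'' is false: the DRA operator for the pair $(f,\indic{\mc C}(\cdot-v))$ contains the term $\proj{\mc C}\bigl(2\prox_{\sigma f}(w)-w+v\bigr)-v$, whereas $T_v(w)=T(w)+v$ contains $\proj{\mc C}\bigl(2\prox_{\sigma f}(w)-w\bigr)+v$; these differ because $\proj{\mc C}$ is not affine, so the original iteration de-trended by $kv$ is \emph{not} DRA on the feasible shifted problem. The expression ``$\prox_{\sigma f}(\bar w-v\cdot\infty)$'' has no meaning, and the $c^\top d$-minimizer over the shifted feasible set need not be unique, so no ``selection'' argument is available either. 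The paper does not attempt any de-trending; it invokes \cite[Th.~6.1(i)]{bauschke2023douglas} as a black box after verifying its hypotheses: $0\in\dom f^*+\dom g^*$ (already established), $v\in\ran(\id-T)$ via \cite[Example~6.1(i)]{bauschke2023douglas} (this is where polyhedrality enters), and $\zer\bigl(\partial f+\partial g(\cdot-v)\bigr)\neq\emptyset$ via feasibility of the shifted problem together with \cite[Cor.~27.3]{bauschke_convex_2017}.
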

This result is particularly interesting when \( \mc{C} \cap \mc{D} = \emptyset \).
Plainly, we learn that any limit point of \(  (d_k)_{k \in \setN} \) is an occupancy measure that is "closest", in the norm sense, to \( \mc{C} \).
Moreover, for polyhedral \( \mc{C} \), \(  (d_k)_{k \in \setN} \) converges to an optimal solution of the modified and by-construction feasible CMDP \eqref{eq:infeasibility_best_approximation}.
Pictorially, the behavior of \autoref{alg:os-cmdp} for infeasible CMDPs is shown in \autoref{subfigure:infeasibility_geometry}.
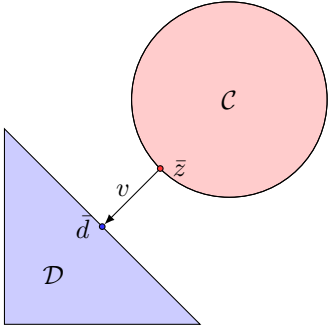
\begin{figure}[htbp]
  \centering

    \begin{tikzpicture}[scale=1.3]
		
		\path (0,0) coordinate (P1);
		\path (0, 2) coordinate (P2);
		\path (2, 0) coordinate (P3);
		\filldraw[fill=blue!20, draw=black] 
		(P1) -- (P2) -- (P3) -- cycle;
		
		\node at (0.5, 0.5) {$\mc{D}$};
			\path (1, 1) coordinate (dbar);

		\def\center{2.3}

		\filldraw[fill=red!20, draw=black] (\center, \center) circle [radius=1];
		\node at (\center, \center) {$\mc{C}$};

		\draw[name path=circle] (\center, \center) circle (1);

	    \path[name path=line] (0.75, 0.75) -- (3.31,3.31);

		\path[name intersections={of=circle and line, by={A,B}}];

		\node[draw, fill=red!80, circle, inner sep=0.75pt, label=right:$\bar{z}$] at (B) {};

            \coordinate (dbar_offset) at ($ (dbar) + (0.02, 0.02) $);
		\coordinate (B_offset) at ($ (B) + (-0.02, -0.02) $);
		\draw[latex-] (dbar_offset) -- node[pos=0.35, above] {$ v $} (B_offset);
            \node[draw, circle, fill=blue!80, inner sep=0.75pt, label=left:$\bar{d}$] at (1, 1) {};
	\end{tikzpicture}
        \caption{Asymptotic behavior of \autoref{alg:os-cmdp} for infeasible problems.
        Any limit point \( (\overline{d}, \overline{z}) \) of the iterates \( (d_k, z_k) \) minimizes the distance 
        between the sets \(\mc{C} \) and \(\mc{D}\).
        The difference of iterates \( w_{k} - w_{k+1} \) converges to the minimal displacement vector \( v \).}
        \label{subfigure:infeasibility_geometry}
\end{figure}

We stress that the results of this subsection only require \autoref{stassumption:setc}, and not \autoref{assum:feasible_constraint_qualification}.
Propositions \ref{prop:infeasibility_governing_sequence} and \ref{prop:primal_shadow_infeasibility_properties} characterize the asymptotic behavior
of the DRA irrespective of feasibility.

\subsection{Termination Criteria}
To specify meaningful termination criteria, we derive the optimality conditions of \eqref{eq:composite_form_cmdp}.
First, we introduce the auxiliary variable \( z \in \setR^{SA} \) and equivalently express \eqref{eq:composite_form_cmdp} as:
\begin{subequations} \label{eq:occupancy_cmdp_admm}
    \begin{alignat}{2}
        &\underset{ d, z \in \setR^{SA}}{\mathclap{\mathrm{minimize}}} 
        \quad~ &&  f(d) + g(z) \\
        & \overset{\hphantom{ d, z \in \setR^{SA} }}{\mathclap{\mathrm{subject~to}}} \quad~
        && d - z = 0~. \label{eq:splitting_equality_constraint}
    \end{alignat}
\end{subequations}
Then, \( (d^{\star}, z^{\star}, \nu^{\star}) \) is a primal-dual solution if and only if \cite[Th.~11.50]{rockafellar2009variational}:
\begin{subequations} \label{eq:optimality}
    \begin{align}
		& 0 \in \partial f(d^{\star}) - \nu^{\star} \label{eq:optimality_f} \\ 
		& 0 \in \partial g(z^{\star}) + \nu^{\star} \label{eq:optimality_g} \\
		& d^{\star} - z^{\star} = 0~. \label{eq:optimality_equality}
    \end{align}
\end{subequations}

Observe that, by Fermat's rule \cite[Th.~16.3]{bauschke_convex_2017}, the optimality condition of \eqref{eq:dra_regularized_mdp} reads \( 0 \in \partial f(d_k) - \nu_k \),
which implies that \eqref{eq:optimality_f} is satisfied at each iteration.
In turn, the optimality condition for \eqref{eq:dra_safety_projection} is
\(- \sigma^{-1} (z_k - d_k) \in \partial g(z_k) + \nu_k\).
By \autoref{prop:nominal_exact_convergence}, \( z_k - d_k \to 0\), therefore both \eqref{eq:optimality_g} and \eqref{eq:optimality_equality}
are satisfied in the limit.

Therefore, we terminate the algorithm with the approximate primal-dual solution \( (d_k, z_k, \nu_k) \) if:
\begin{equation}
    \begin{gathered}
        \norm{d_k - z_k}_{\infty} \leq \varepsilon_{\textrm{opt}}, \quad \text{and} \quad \\
        \max(C_i(d_k) - b_i, 0) \leq \varepsilon_{\textrm{con}}(1 + \abs{b_i}), ~ \forall i = 1, \ldots, n_c,
    \end{gathered}
\end{equation}
where \(\varepsilon_{\textrm{opt}}, \varepsilon_{\textrm{con}} > 0  \) are specified tolerances.
The first condition is in line with \eqref{eq:optimality}, while the second one explicitly controls the allowable constraint violation,
recalling that \( \mc{C} = \{ d \in \setR^{SA} \, | \, C_i(d) \leq b_i, ~ i = 1, \ldots, n_c \} \).

Conversely, characterizing a problem instance as infeasible could be achieved by testing that \( \gs_{k+1} - \gs_k \) converges to
a non-zero vector.
Unfortunately, such a criterion can be misleading since it is possible that \( \gs_{k+1} - \gs_k \) stagnates at a particular value for a number of iterations,
without having reached its limit point.
Instead, since \autoref{prop:primal_shadow_infeasibility_properties} guarantees that \( d_{k+1} - d_k \to 0 \),
we flag a problem as infeasible if \( d_{k+1} - d_k \) is small, but \( d_{k+1} \) does not satisfy the constraints up to the required tolerance.
Concretely, we check the following condition of infeasibility for some small tolerance \( \varepsilon_{\textrm{inf}} > 0\):
\begin{equation} \label{eq:infeasibility_termination}
    \begin{gathered}
        \norm{d_{k+1} - d_k}_{\infty} \leq \varepsilon_{\textrm{inf}}, \quad \text{and} \quad \\
        \max(C_i(d_k) - b_i, 0) > \varepsilon_{\textrm{con}}(1 + \abs{b_i}), ~ \forall i = 1, \ldots, n_c~.
    \end{gathered}
\end{equation}

\section{Practical Implementation} \label{sec:practical_implementation}
\subsubsection*{Inexact QRPI}%
From a practical standpoint, executing the inner loop to full convergence is computationally demanding but, as we discuss next, unnecessary. 
Instead, it suffices to solve \eqref{eq:regularized_mdp} up to some tolerance that decreases with the iteration \( k \).
Formally, let \( e_k \coloneqq \norm{d_k - \prox_{\sigma f} (\gs_k)} \) and notice that, in general, \( e_k > 0 \) unless
we execute infinitely many QRPI iterations.
If \( e_k \) decays sufficiently fast, formally \( \sum_{k = 0}^{\infty} e_k < \infty \), then
the convergence certificate of \autoref{prop:nominal_exact_convergence} remains valid \cite[Th.~7]{eckstein1992douglas}.

In principle, we could estimate an upper bound on \( e_k \), e.g., using \cite[Lemma 4.5]{Luo1992OnTC} and execute enough QRPI iterations
to ensure the necessary decrease in \( e_k \).
Yet, in practice, we have observed that warm-starting QRPI and running a small fixed number of inner iterations,
specifically \( \overline{\ell} =2 \), generates trajectories of 
\autoref{alg:os-cmdp}
that behave similarly to the case where QRPI is run to convergence (see Appendix \ref{appendix:inexact_qrpi}).
We adopt this approximate but efficient approach in our implementation.

\subsubsection*{Regularized Policy Evaluation}
The main computational bottleneck of \autoref{alg:os-cmdp} is solving the linear system \eqref{eq:inner_v_update},
for which we propose two approaches.
In the direct approach, we factorize \( (\gamma P - \Xi)^{\top} (\gamma P - \Xi) \) and solve \eqref{eq:inner_v_update}
through a computationally-inexpensive forward and backward substitution.
As shown in \autoref{lemma:trivial_nullspace}, the matrix  \( (\gamma P - \Xi)^{\top} (\gamma P - \Xi) \) is positive definite therefore we factorize it 
via a Cholesky decomposition.
Further, since this matrix does not change across iterations we only need to factorize it once during the initialization 
of \autoref{alg:os-cmdp}.
While efficient, factorizing, or even computing, \( (\gamma P - \Xi)^{\top} (\gamma P - \Xi) \) can be prohibitive when
\( P \) is very large.

Instead, in the indirect approach, we solve \eqref{eq:inner_v_update} via an iterative method, 
such as the conjugate gradient method \cite{nocedal2006numerical}, which
only requires matrix-vector product evaluations of \( (\gamma P - \Xi)^{\top} \big((\gamma P - \Xi) V \big) \). 
This approach is particularly useful when \( \gamma P - \Xi \) is large and sparse, but is generally
less efficient than the direct one, whenever the latter is applicable.
In our simulations we use the direct method.

\subsubsection*{Choosing \(\sigma\)}
The value of \( \sigma \) significantly affects the speed and transient behavior of \autoref{alg:os-cmdp}.
Intuitive design guidelines can be drawn by interpreting \( \sigma \) as a scaling constant for the objective
of problem \eqref{eq:composite_form_cmdp}.
Since the only term affected by \( \sigma \) is \( c^{\top} d \), we can conclude that large values of \( \sigma \)
prioritize cost minimization, whereas small ones prioritize constraint satisfaction.
These intuitions match our numerical experience.

\subsubsection*{Relaxation}
A popular variant of the DRA is obtained by applying relaxation to the updates of \( \gs_k \), i.e.,
replacing \eqref{eq:dra_governing_update} by
\( \gs_{k+1} = \gs_k + \omega (z_k - d_k) \),
where \( \omega \in (0, 2) \).
The relaxed iteration retains theoretical guarantees, and
numerical studies have reported faster convergence for \( \omega > 1 \) \cite{eckstein1998operator, eckstein1994parallel}.
In our implementation we use \( \omega = 1.5 \).

\section{Numerical Simulations} \label{sec:numerical_simulations}
We deploy OS-CMDP on two benchmark problems.
First, we consider Garnet MDPs \cite[Sec.~8]{bhatnagar:hal-00840470}, a well-studied class of randomly-generated MDPs,
and compare the performance of OS-CMDP against competing approaches
for CMDPs with polyhedral and more general convex constraints.
Second, we consider a grid world problem which we use to qualitatively study the behavior of our algorithm
for both feasible and infeasible problem instances.

The parameters of \autoref{alg:os-cmdp} are set to
\( \sigma = 2 \times 10^{-5}, \omega = 1.5, \; \overline{\ell}=2, \; \varepsilon_{\textrm{opt}}=10^{-5}, \; \varepsilon_{\textrm{con}}=10^{-4}, \;
\varepsilon_{\textrm{inf}}= 10^{-6} \).
All simulations ran on a Macbook Pro with 16 GB RAM.
Our code is implemented in Python using \texttt{PyTorch} \cite{paszke2019pytorch}.

\subsection{Garnet MDPs}
Garnet problems are a class of abstract, parametrized and randomly-generated 
MDPs which have often been used as benchmarks \cite{farahmand2021pid, grand2021scalable, tamar2012integrating};
see \cite[Sec.~8]{bhatnagar:hal-00840470} for a detailed description of Garnet MDPs.
An important parameter of this problem is the branching factor \( f_b \), that determines the fraction of possible
next states for each \( (s,a) \) pair or, in other words, the fraction of non-zeros entries for each row of \( P \).
In this study, we use two values of \( f_b \) corresponding to dense (\( f_b =0.5\)), and
sparse (\( f_b =0.05\)) transition matrices \( P \).
We consider MDPs with \(A = 10 \) actions and an increasing number of states \( S \)
to study the scalability of different algorithms.
The costs \( c(s,a) \) are generated randomly from a normal distribution \( \mc{N}(0, 1) \),
and we consider \( \gamma = 0.95 \).

First,  we consider linear constraints with the entries of \( \lcm \in \setR^{10 \times SA} \) drawn from \( \mc{N}(0, 1) \) and
\( b \) from \( \mc{N}(-0.2, 1) \).
We compare \autoref{alg:os-cmdp} with Gurobi \cite{gurobi}, a commercial LP solver,
and a primal-dual method (PDM) (see, e.g., \cite[Alg.~1]{paternain2022safe}), that solves an uncostrained MDP using policy iteration at each step
(for implementation details see Appendix \ref{subsubsec:implementation_pdm}).
To ensure a fair comparison, we implemented an inexact version of the primal-dual method by executing
finitely many policy iteration updates at each step.
For Gurobi, we deploy its concurrent optimizer that simultaneously executes a barrier, a primal simplex and dual simplex method, 
and terminates once any method satisfies its termination tolerances which are set to \( 10^{-4} \) for both optimality
and constraint violation.
The termination conditions for PDM are outlined in Appendix \ref{subsubsec:implementation_pdm}.
We note that, all three approaches employ the same tolerance for constraint violation but different criteria of optimality;
we chose the corresponding tolerances such that the resulting objective values
are similar.
In \autoref{tab:comparison_linear}, we report the computation times (averaged over 10 runs) and attained objective for each approach.

\begin{table*}[htbp]
    \centering
    \caption{Solver comparison for linear constraints, in terms of computation times and objective value, reported in parantheses. 
    The best performing solver is shown in bold, for each metric and setting.}
    
    \setlength{\aboverulesep}{0pt}
    \setlength{\belowrulesep}{0pt}
    \setlength{\extrarowheight}{.75ex}
    \resizebox{\linewidth}{!}{
    \begin{tabular}{@{}ccccccc@{}}
        \toprule
        & \multicolumn{3}{c}{ \( f_b = 0.05 \)} & \multicolumn{3}{c}{\( f_b=0.5 \)} 
        \\
         \cmidrule(lr){2-4} \cmidrule(l){5-7}
        Solver & OS-CMDP & Gurobi & PDM & OS-CMDP & Gurobi & PDM \\ \midrule
        \( S = 100\hphantom{0} \) & \cellcolor{gray!25} $0.48s~(-1.06)$ & $\textbf{0.05s}~(\textbf{-1.12})$ & $0.89s~(-1.11)$ & \cellcolor{gray!25} $0.39s~(-1.05)$ & $\textbf{0.07s}~(\textbf{-1.10})$ & $0.69s~(-1.09)$ \\
        \( S = 1000\hphantom{} \) &\cellcolor{gray!25} $\textbf{1.00s}~(-1.07)$ & $1.80s~(\textbf{-1.08})$ & $3.64s~(-1.04)$ & \cellcolor{gray!25} $\textbf{0.63s}~(-1.08)$ & $2.96s~(\textbf{-1.11})$ & $3.22s~(-1.08)$ \\
        \( S = 3000\hphantom{} \) & \cellcolor{gray!25}$ \textbf{11.52s}~(-1.06)$ & $20.13s~(\textbf{-1.07})$ & $70.66s~(-1.02)$ & \cellcolor{gray!25} $\textbf{4.98s}~(-1.08)$ & $118.15s~(\textbf{-1.09})$ & $30.54s~(-1.04)$ \\
        \( S = 5000 \) & \cellcolor{gray!25} $\textbf{11.89s}~(-1.03) $ & $71.98s~(\textbf{-1.07})$ & $88.37s~(-1.01)$ & \cellcolor{gray!25} $\textbf{10.58s}~(-1.05)$ & $756.65s~(\textbf{-1.08})$ & $86.28s~(-1.02)$ \\
         \bottomrule
    \end{tabular}
    \label{tab:comparison_linear}
    }
\end{table*}
We observe that in all cases OS-CMDP performs on par with other solvers in terms of objective value.
In particular, it typically yields a better objective than PDM and slightly worse than Gurobi (in the
order of \(1-5\% \) worse).
This is a consequence of the fact that Gurobi is suited to finding high-accuracy solutions.
In terms of computation time, OS-CMDP significantly outperforms other approaches, especially
as the problem dimension increases.
We observe that OS-CMDP performs similarly regardless of density,
whereas Gurobi is able to exploit sparsity in the problem.
We note that for OS-CMDP we take into account both the execution time and the setup time, i.e.,
the computation and factorization of \( (\gamma P - \Xi)^{\top} (\gamma P - \Xi) \).

Next, we consider \( \ell_2 \)-norm constraints as in Example 2, with randomly generated
\( \hat{d} \in \mc{D} \) and \( \varepsilon = 0.2 \).
We compare \autoref{alg:os-cmdp} against Gurobi and SCS \cite{o2016conic}, a first-order solver
for conic programs.
We use SCS instead of a primal-dual scheme since the latter is not directly applicable for
non-linear constraints.
We set the optimality and feasibility tolerance of SCS to \( 10^{-4} \).
Results are presented in \autoref{tab:comparison_l2}.

\begin{table*}[htbp]
    \centering
    \caption{Solver comparison for \( \ell_2\)-constraints, in terms of computation times and objective value, reported in parantheses. 
    OOM indicates that the solver ran out of memory.
    The best performing solver is shown in bold, for each metric and setting.}
    
    \setlength{\aboverulesep}{0pt}
    \setlength{\belowrulesep}{0pt}
    \setlength{\extrarowheight}{.75ex}
    \resizebox{\linewidth}{!}{
    \begin{tabular}{@{}ccccccc@{}}
        \toprule
        & \multicolumn{3}{c}{ \( f_b = 0.05 \)} & \multicolumn{3}{c}{\( f_b=0.5 \)} 
        \\
         \cmidrule(lr){2-4} \cmidrule(l){5-7}
        Solver & OS-CMDP & Gurobi & SCS & OS-CMDP & Gurobi & SCS \\ \midrule
        \( S = 100\hphantom{0} \) & \cellcolor{gray!25} $\textbf{0.01s}~(\textbf{-0.55})$ & $0.03s~(\textbf{-0.55})$ & $0.016s~(\textbf{-0.55})$ 
        & \cellcolor{gray!25} $\textbf{0.01s}~(\textbf{-0.58})$ & $0.68s~(\textbf{-0.58})$ & $0.06s~(\textbf{-0.58})$ \\
        \( S = 1000\hphantom{} \) &\cellcolor{gray!25} $\textbf{0.38s}~(\textbf{-1.39})$ & $2.72s~(\textbf{-1.39})$ & $1.29s~(\textbf{-1.39})$ 
        & \cellcolor{gray!25} $\textbf{0.39s}~(-1.45)$ & $224.02s~(\textbf{-1.46})$ & $8.77s~(\textbf{-1.46})$ \\
        \( S = 3000\hphantom{} \) & \cellcolor{gray!25} $\textbf{3.19s}~(-1.49)$ & $44.61s~(-1.53)$ & $39.80s~(\textbf{-1.53})$ 
        & \cellcolor{gray!25} $\textbf{3.18s}~(-1.51)$ & $\text{OOM}$ & $194.85~(\textbf{-1.55})$ \\
        \( S = 5000 \) & \cellcolor{gray!25} $\textbf{10.44s}~(-1.51)$ & $97.33s~(-1.52)$ & $185.73s~(\textbf{-1.53})$ & \cellcolor{gray!25} $\textbf{10.45s}~(-1.52)$ & $\text{OOM}$ & $984.81s~(\textbf{-1.54})$ \\
         \bottomrule
    \end{tabular}
    \label{tab:comparison_l2}
    }
\end{table*}
Again, we note that OS-CMDP retrieves a solution significantly faster than competing methods,
though with a slightly worse objective value.
Obtaining a high-accuracy solution can be challenging for OS-CMDP, since it is a first-order method, 
but its strength lies in rapidly solving the problem up to medium accuracy.
To highlight this, we remark that if we fix the computational budget to the time that OS-CMDP needs to converge,
then other approaches yield highly-suboptimal solutions, or no solutions at all.

\subsection{Grid world}
Next, we study the qualitative behavior of \autoref{alg:os-cmdp} for both feasible and infeasible constraints by
deploying it on a grid world problem,
wherein an agent tries to navigate from a start location to a destination while avoiding obstacles and
minimizing path length.
The environment of the agent is represented as a grid whose cells correspond to states, some of which
are occupied by obstacles.
The agent can play the actions \texttt{\{up, down, left, right\}} that induce a transition
to the corresponding neighboring cell\footnotemark with probability \( 1 - \delta\), where \( \delta \in [0,1) \).
\footnotetext{For simplicity, we assume that executing actions 
that lead outside the grid do not affect the agent's position.}
Conversely, with probability \(\delta\) the agent ends up in a (uniformly) randomly-selected neighboring cell.
This problem is inspired from case studies in \cite{khairy2020gradientaware, chow2018risk, tessler2018reward}.
Since this benchmark features very sparse \( P \), we will focus on qualitative characteristics of
\autoref{alg:os-cmdp} rather than computational performance.

In our simulations, we consider a \( 25 \times 25 \) grid, consisting of \( 625 \) states and \( 45 \) obstacles,
and we set \( \gamma = 0.99\), and \( \delta = 0.05 \).
The top left and bottom right cells are the respective start and destination of the agent.
To minimize path length, we set the cost \( c \) to 1 for all non-destination states
and to 0 for the destination.
We enforce obstacle avoidance as a linear constraint \( \co^{\top} d \leq \bo \),
where \( \co(s,a) \) is equal to 1 if state \( s \) corresponds to an obstacle, and 0 otherwise.
The threshold \( \bo > 0\) prescribes the allowable (discounted) probability of collision.

We note that the stage cost \( c \) only encourages, but does not force, the agent to reach its destination.
To achieve that, we impose the additional constraint \( c^{\top} d \leq \bp \)
for \( \bp \in [0, 1] \).
Intuitively, decreasing \( \bp \) increases the probability of reaching the destination
and decreases the resulting path length.
We remark that \( c^{\top} d = 1 \) if the agent almost surely does not reach the destination,
whereas if the agent disregards the obstacles and only minimizes path length
we compute that \( c^{\top} d = 0.396 \) by solving 
the same MDP without constraints.

We will study the occupancy measure returned by \autoref{alg:os-cmdp} for various choices of
\( \bo \) and \( \bp \).
We remark that certain combinations of \( \bo \) and \( \bp \) render the problem numerically challenging,
for which we utilize smaller tolerances and scaling parameter \( \sigma \).

In Figures \ref{subfig:grid_feasible_loose} and \ref{subfig:grid_feasible_tight},
we consider \( (\bp, \bo) = (0.9, 10^{-3}) \) and \( (\bp, \bo) = (0.9, 2\times10^{-4}) \), respectively.
Colors represent the state-marginal occupancy measure of each cell \( s \), i.e., \( \sum_{a} d(s,a) \),
and arrows indicate that the corresponding action is chosen with non-zero probability, formally \( d(s,a) > 10^{-10} \).
Under both choices of thresholds, the problem is feasible and for smaller \( \bo \) the agent sacrifices path length
in favor of obstacle avoidance.

\begin{figure*}
    \centering
    \begin{subfigure}{0.44\textwidth}
        \centering
        \includegraphics[height=6.1cm]{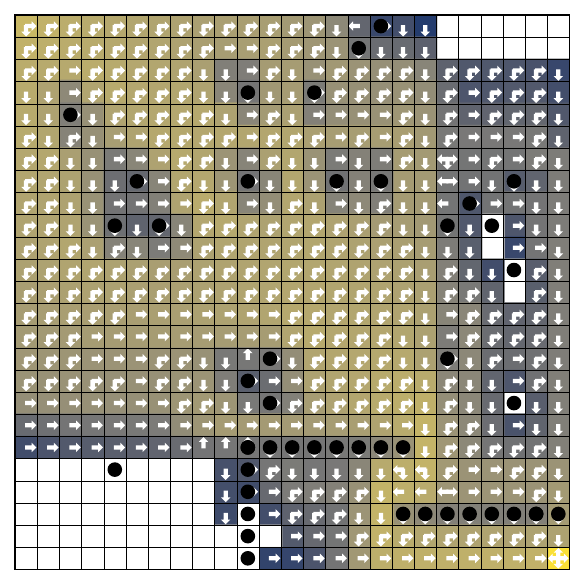}
        \caption{\( (\bp, \bo) = (0.9, 10^{-3}) \)}
        \label{subfig:grid_feasible_loose}
    \end{subfigure}
    \hfill
    \begin{subfigure}{0.55\textwidth}
        \centering
        \includegraphics[height=6.1cm]{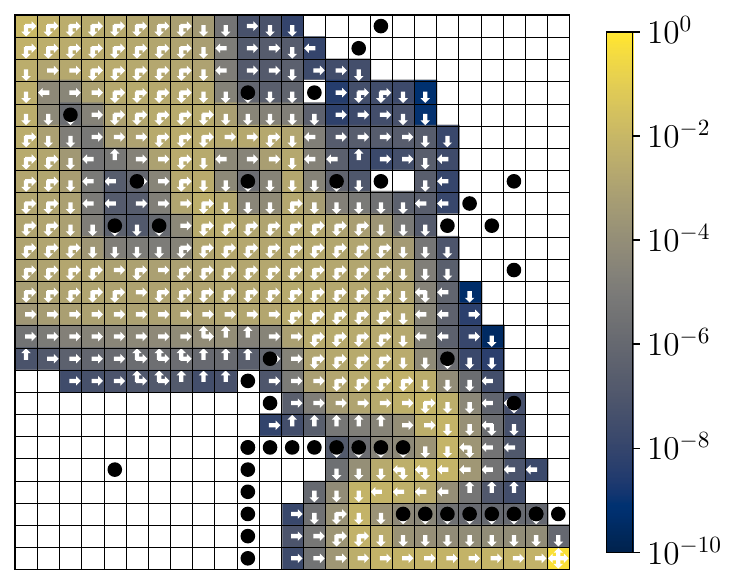}
        \caption{\( (\bp, \bo) = (0.9, 2\times10^{-4}) \)}
        \label{subfig:grid_feasible_tight}
    \end{subfigure}
    \caption{State marginal occupancy measure (in color) and policy (as arrows) for two feasible choices
    of \( (\bp, \bo) \). 
    The thresholds \( \bp \) and \( \bo\) correspond to the constraints of reaching the destination and
    avoiding collisions, respectively.
    Black circles indicate obstacles.
    Values below \( 10^{-10} \) are shown in white.}
    \label{fig:grid_feasible}
\end{figure*}

In Figure \ref{subfig:grid_infeasible_loose}, we consider \( (\bp, \bo) = (0.9, 2\times10^{-5}) \), 
that renders the problem infeasible, i.e., the agent cannot reach the destination without collision.
The resulting policy simply tries to avoid the obstacles but, interestingly, if
the agents ends up close to the destination then it will try to reach it.
In Figure \ref{subfig:grid_infeasible_tight}, we set \( (\bp, \bo) = (0.6, 2\times10^{-5}) \) which is again infeasible,
but tighter in terms of the path length constraint.
Now, the resulting policy appears to randomly choose between two "strategies".
One leads the agent to the destination via a path that is not sufficiently safe,
and the other makes the agent wander in a safe and obstacle-free part of the grid.
Our results highlight the ability of OS-CMDP to extract 
meaningful policies even from infeasible specifications,
and indicate how individual constraints affect feasibility.

We remark that regions where \( d \) seems inconsistent with the dynamics, e.g., arrows from coloured cells
pointing into white cells at the top right of Figure \ref{subfig:grid_infeasible_tight},
are numerical artifacts from approximately enforcing \( d \in \mc{D} \).
\begin{figure*}
    \centering
    \begin{subfigure}{0.44\textwidth}
        \centering
        \includegraphics[height=6.1cm]{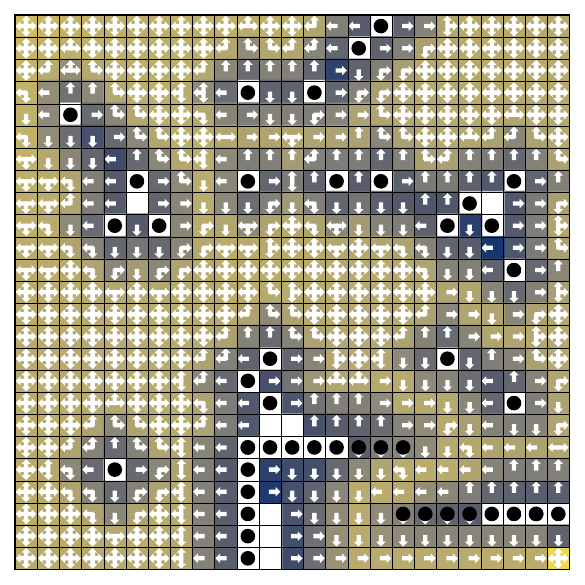}
        \caption{\( (\bp, \bo) = (0.9, 2\times10^{-5}) \)}
        \label{subfig:grid_infeasible_loose}
    \end{subfigure}
    \hfill
    \begin{subfigure}{0.55\textwidth}
        \centering
        \includegraphics[height=6.1cm]{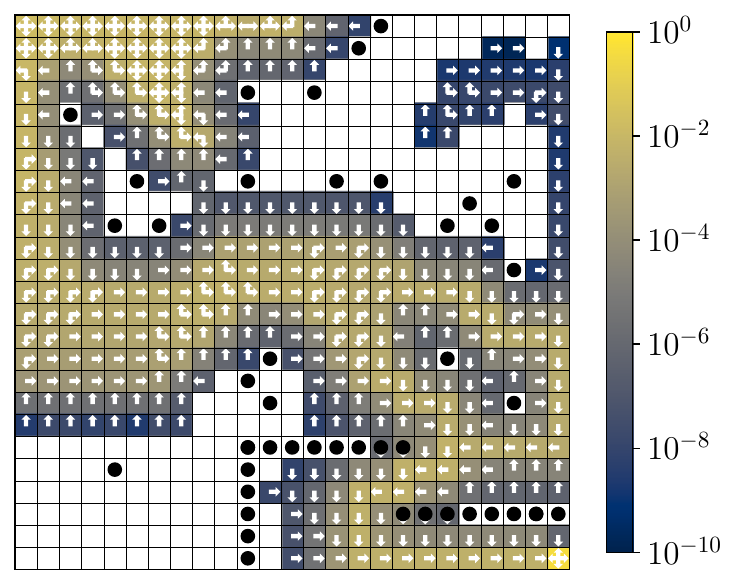}
        \caption{\( (\bp, \bo) = (0.6, 2\times10^{-5}) \)}
        \label{subfig:grid_infeasible_tight}
    \end{subfigure}
    \caption{State marginal occupancy measure (in color) and policy (as arrows) for two infeasible choices
    of \( (\bp, \bo) \). 
    The thresholds \( \bp \) and \( \bo\) correspond to the constraints of reaching the destination and
    avoiding collisions, respectively.
    Black circles indicate obstacles. Values below \( 10^{-10} \) are shown in white.}
    \label{fig:grid_infeasible}
\end{figure*}

\section{Conclusion}
In this work, we developed a first-order algorithm for MDPs with convex constraints by deploying
the Douglas-Rachford splitting.
The main challenge in our approach is solving a quadratically-regularized MDP,
for which we proposed an efficient scheme inspired by policy iteration.
We demonstrated that our algorithm compares favorably against existing approaches on a benchmark problem,
and is able to gracefully handle infeasible specifications.
The main limitation of our work is that it requires perfect knowledge of the underlying MDP and relies
on a direct parametrization of the occupancy measure.
Future work includes extending our algorithm to the safe RL setting and employing function approximation. 

\appendices
\section*{Appendix}
\renewcommand\thesubsection{\Alph{subsection}}
\subsection{Derivations  of \autoref{sec:algorithm_design}} \label{app:derivation_of_qrpi}
\paragraph*{Derivation of \eqref{eq:regularized_mdp}}
To derive the dual problem of the regularized MDP \eqref{eq:regularized_mdp},
we first write the Lagrangian:
\begin{equation} \label{eq:lagragian_reg_mdp}
    \begin{aligned}
    \mc{L}(d; V, \varphi) = & (c + \gamma P V - \Xi V )^{\top} d + (1 - \gamma) \rho^{\top} V \\
    & + \frac{1}{2\sigma} \norm{d - \gs_k}^2 - \varphi^{\top} d~,
    \end{aligned}
\end{equation}
where \( \varphi \geq 0 \). We derive the dual function by minimizing \(\mc{L}\) with respect to \( d \).
The Lagrangian is a strongly convex function of \( d \), so we find the
unique minimizer by setting \( \nabla_d \mc{L}(d; V, \varphi) = 0 \), and solving for \(d\) yields:
\begin{equation} \label{eq:lagrangian_minimizer_d}
    d = \gs_k + \sigma( - c - \gamma P V + \Xi V  + \varphi)~.
\end{equation}
Substituting \eqref{eq:lagrangian_minimizer_d} onto \eqref{eq:lagragian_reg_mdp} yields
the dual function \( \kappa \), as shown in \eqref{eq:dual_regularized_mdp}. \( \hfill \square\)
\begin{lemma} \label{lemma:trivial_nullspace}
    The matrix \( \gamma P - \Xi \in \setR^{SA \times S} \) has trivial nullspace.
\end{lemma}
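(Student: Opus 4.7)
The plan is to show that any $V \in \setR^S$ in the nullspace of $\gamma P - \Xi$ must satisfy a discounted fixed-point equation involving a row-stochastic matrix, and then invoke the fact that such a fixed point must be zero when $\gamma < 1$.

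First, I would translate the condition $(\gamma P - \Xi) V = 0$ into a componentwise statement. By the definition of $\Xi = \mathbf{1}_A \kron I_S$, we have $[\Xi V]_{(s,a)} = V(s)$ for every state-action pair, so the nullspace equation is equivalent to
\begin{equation*}
V(s) \;=\; \gamma \sum_{s'\in \mc{S}} P(s' \mid s, a)\, V(s') \quad \text{for all } (s,a) \in \mc{S}\times\mc{A}.
\end{equation*}
Since the equality must hold for every action, I can pick an arbitrary deterministic policy $\pi : \mc{S} \to \mc{A}$ and restrict attention to the rows indexed by $(s, \pi(s))$. Stacking these $S$ equations yields $V = \gamma P_\pi V$, where $P_\pi \in \setR^{S\times S}$ is the row-stochastic transition matrix with $[P_\pi]_{s,s'} = P(s' \mid s, \pi(s))$, i.e., $(I - \gamma P_\pi) V = 0$.

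Next, I would argue that $I - \gamma P_\pi$ is invertible. As $P_\pi$ is row-stochastic, every eigenvalue $\lambda$ of $P_\pi$ satisfies $|\lambda| \leq 1$ (by Gershgorin or Perron--Frobenius), so every eigenvalue of $I - \gamma P_\pi$ has modulus at least $1 - \gamma > 0$; equivalently, the Neumann series $\sum_{t\geq 0} (\gamma P_\pi)^t$ converges and provides an explicit inverse. Consequently $V = 0$, proving that $\gamma P - \Xi$ has trivial nullspace.

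There is no real obstacle here: the essence of the argument is the standard observation that discounting makes the Bellman operator a contraction, the same reason why policy evaluation is well-posed for $\gamma \in (0,1)$. If one prefers to avoid spectral arguments altogether, the same conclusion follows from a maximum-principle style calculation: letting $s^{\star} \in \argmax_s V(s)$ with value $V^{\star}$, the identity above gives $V^{\star} \leq \gamma V^{\star}$, which forces $V^{\star} \leq 0$; symmetrically the minimum entry must be non-negative, so $V \equiv 0$.
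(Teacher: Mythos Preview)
Your proof is correct and follows essentially the same approach as the paper: both arguments extract from the tall matrix $\gamma P - \Xi$ a square $S\times S$ block of the form $\gamma P_{\pi} - I_S$ for some (deterministic) policy $\pi$, and then invoke the standard fact that $I_S - \gamma P_{\pi}$ is invertible because $P_{\pi}$ is row-stochastic and $\gamma\in(0,1)$. The paper happens to use the constant-action policies arising from the block structure, whereas you allow an arbitrary deterministic $\pi$ and additionally supply a clean maximum-principle alternative, but the substance is the same.
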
 
\begin{proof}
    The matrices \( \Xi \) and \( P \) are given by:
    \begin{align}
        \Xi & \coloneqq \begin{bmatrix}
            I_S \\ 
            \vdots \\
            I_S
        \end{bmatrix}, ~
        P \coloneqq \begin{bmatrix}
            (P(s' \, | \, s=1, a=1))_{s' \in \mc{S}} \\
            (P(s' \, | \, s=2, a=1))_{s' \in \mc{S}} \\
            \vdots \\
            (P(s' \, | \, s=S, a=A))_{s' \in \mc{S}} 
        \end{bmatrix}.
    \end{align}
    Then, notice that 
    \begin{equation}
        \gamma P - \Xi = \begin{bmatrix}
            \gamma P_{a=1} - I_S \\ \vdots \\
            \gamma P_{a=A} - I_S
        \end{bmatrix}
    \end{equation}
    where \( P_{a=i} \) denotes the transition matrix induced by the policy
    that always chooses action \( i \in \mc{A} \).
    Standard dynamic programming arguments \cite[Th.~6.1.1]{puterman1990markov} guarantee that, for any \( i \in \mc{A} \),
    the matrix \( \gamma P_{a=i} - I_S \) is invertible, hence \( \gamma P - \Xi \) has trivial nullspace.
\end{proof}
\paragraph*{Derivation of \eqref{eq:dual_occupancy_optimality}}
The maximizer of the dual function \( \kappa \) for a fixed \( V^{\star} \) can be expressed as follows:
\begin{align*}
    & \argmax_{\varphi \geq 0} ~- \frac{\sigma}{2} \norm{c + \gamma P V^{\star} - \Xi V^{\star} - \varphi}^2 
    \\ & ~~~~~~~ + \gs_k^{\top} (c + \gamma P V^{\star} - \Xi V^{\star} - \varphi)
    + (1 - \gamma) \rho^{\top} V^{\star} \\
     \overset{(a)}{=} & \argmax_{\varphi \geq 0} - \norm{c + \gamma P V^{\star} - \Xi V^{\star} - \varphi}^2
     \\ & ~~~~~~~ + 2 \gs_k^{\top}/\sigma (c + \gamma P V - \Xi V - \varphi) - \norm{\gs_k/\sigma}^2 \\
     \overset{(b)}{=} & \argmax_{\varphi \geq 0 } - \norm{\varphi - (c + \gamma P V - \Xi V - \gs_k/\sigma)}^2 \\
     \overset{(c)}{=} &  \max(c + \gamma P V - \Xi V - \gs_k/\sigma, 0),
\end{align*}
where \((a)\) is by multiplying with \( 2/\sigma > 0\) and adding/removing terms that do not depend on \( \varphi \),
\((b)\) by factorizing the square,
and \((c)\) by noting that the problem in \((b)\) is an orthogonal projection onto the non-negative orthant.
\( \hfill \square \)

\subsection{Proofs of \autoref{sec:convergence_analysis}} \label{app:convergence_proofs}
\paragraph*{Proof of \autoref{prop:qrpi_convergence}}
To prove the claim, we will invoke \cite[Th.~2.1]{Luo1992OnTC}, hence we proceed to verify that its preconditions hold.
For ease of reference, in this proof we will use the notation in \cite{Luo1992OnTC}, i.e.,
let \( g(\cdot) \coloneqq \norm{c+\cdot}_2^2, E \coloneqq \begin{bmatrix}
    \gamma P - \Xi & -I
\end{bmatrix},
b \coloneqq \begin{bmatrix}
    (\gamma P - \Xi)^{\top} \gs_k + (1- \gamma \rho) & - \gs_k
\end{bmatrix}\).
Observe that, since \( \gamma P - \Xi\) and \( -I \) are full column rank, then \( E \) has no zero column.

Next, we will establish that Assumption A1(a) in \cite{Luo1992OnTC} holds.
Observe that, strong duality holds for \eqref{eq:regularized_mdp}, by virtue of Slater's condition since all the constraints of the convex program \eqref{eq:regularized_mdp} are affine,
and the problem is feasible \cite[\S 5.2.3]{boyd2004convex}.
Feasibility is guaranteed by the fact that any stationary Markov policy generates an
occupancy measure that satisfies \eqref{eq:regularized_mdp_flow_constraint}, and \eqref{eq:regularized_mdp_positivity}\cite[Th.\ 3.2]{altman2021constrained}.
The primal problem \eqref{eq:regularized_mdp} is strongly convex, hence the optimal value is attained and finite.
As such, the dual problem \eqref{eq:dual_regularized_mdp} admits a solution, which
verifies Assumption A1(a) in \cite{Luo1992OnTC}.

Points (b) and (c) in Assumption A1 directly follow from \( g \) being strongly convex.
Finally, notice that the iteration \eqref{eq:block_coordinate_ascent}, coincides with the 
Coordinate Descent Method given in \cite[Eq.~(8),(9)]{Luo1992OnTC} under a cyclic update rule 
and employing block coordinate updates (which does not compromise the convergence guarantees, as
discussed in \cite[Sec.~6]{Luo1992OnTC}).
The result follows by invoking \cite[Th.~2.1]{Luo1992OnTC}.
\( \hfill \square \)

\paragraph*{Proof of \autoref{prop:nominal_exact_convergence}}
Under the assumption that the inner loop of \autoref{alg:os-cmdp} is executed to full convergence,
\autoref{alg:os-cmdp} coincides with the DRA \eqref{eq:dra_for_cmdps} applied to problem \eqref{eq:occupancy_cmdp}.
To prove the claim we will invoke \cite[Cor.~28.3]{bauschke_convex_2017}.
The functions \( f \) and \( g \) are proper, closed, and convex.
Further, \autoref{assum:feasible_constraint_qualification} ensures
that \( 0 \in \partial f + \partial g \) by virtue of \cite[Cor.~27.6]{bauschke_convex_2017}.
Having established its preconditions, \cite[Cor.~28.3]{bauschke_convex_2017} yields the claim.
\( \hfill \square \)

\paragraph*{Proof of \autoref{prop:infeasibility_governing_sequence}}
By Fact 2.1 and Proposition 3.2 of \cite{banjac2021minimal}, we know that
\( \gs_{k} - \gs_{k+1} \to v \coloneq \proj{\overline{\dom f - \dom g} \cap \overline{\dom f^* + \dom g^*}}(0) \), 
and \( v = v_P + v_D\) where
\( v_P \coloneqq \proj{\overline{\dom f - \dom g}}(0) \) and
\( v_D \coloneqq \proj{\overline{\dom f^* + \dom g^*}}(0) \).
To establish the claim, we will show that for the considered \( f \) and \( g \)
it holds \( v_D = 0\) and \( v_P = \argmin_{\beta \in \mc{D} - \mc{C}} \norm{\beta} \).

First, to show that \( v_D = 0 \) we proceed along the lines of \cite[Example 6.2]{bauschke2023douglas}.
We have that \( \mc{D} \) is a compact polyhedron \cite[Th.~3.2]{altman2021constrained},
hence \( \partial f \) is a maximally monotone operator \cite[Th.~20.25]{bauschke_convex_2017}
with a bounded domain \( \dom \partial f \subseteq \mc{D} \).
Then, invoking \cite[Cor.~21.25]{bauschke_convex_2017} we deduce that \( \ran \partial f = \setR^{SA} \)
and, since, \( \ran \partial f = \dom \partial f^{*} \subseteq \dom f^{*} \) \cite[Cor.~16.30]{bauschke_convex_2017},
we learn that \( \dom f^{*} = \setR^{SA} \).
Further, \( g^* \) is equal to the support function of \( \mc{C} \), and since \( \mc{C} \neq \emptyset \)
we have \( 0 \in \dom g^* \neq \emptyset \).
Therefore, we conclude that \( 0 \in \dom f^{\star} + \dom g^{\star} \),
which immediately implies \( v_D = 0 \).

Next, we observe that \( \dom f = \mc{D} \) and \( \dom g = \mc{C} \).
The set \( \mc{D} \) is compact and \( \mc{C} \) is closed, therefore
\( \overline{\mc{D} - \mc{C}} = \overline{\mc{D}} - \overline{\mc{C}} = \mc{D} - \mc{C}  \).
The claim follows by noting that
\( v_P = \proj{\mc{D} - \mc{C}}(0) = \argmin_{\beta \in \mc{D} - \mc{C}} \norm{\beta} \).
\( \hfill \square\)

\begin{lemma} \label{lemma:strongly_separating_hyperplane}
    Assume that \( \mc{C} \cap \mc{D} = \emptyset \). Then, the minimal displacement vector \( v \) 
    is the normal vector of a hyperplane
    that strongly separates \(\mc{C} \) and \( \mc{D} \). Formally,
    \begin{equation}
        \min_{d \in \mc{D}} v^{\top} d > \sup_{z \in \mc{C}} v^{\top} z~.
    \end{equation}
\end{lemma}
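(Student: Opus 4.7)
The plan is to exploit the variational characterization of $v$ as the projection of the origin onto the set $\mathcal{D} - \mathcal{C}$, which has already been established in the proof of \autoref{prop:infeasibility_governing_sequence}. Recall that $\mathcal{D}$ is a compact polytope and $\mathcal{C}$ is closed and convex by \stassumptionautorefname~\ref{stassumption:setc}, so $\mathcal{D} - \mathcal{C}$ is a closed convex subset of $\setR^{SA}$ (compactness of $\mathcal{D}$ ensures the Minkowski difference is closed, and convexity is preserved). Consequently $v = \proj{\mathcal{D} - \mathcal{C}}(0)$ is well-defined and $\mathcal{C} \cap \mathcal{D} = \emptyset$ forces $0 \notin \mathcal{D} - \mathcal{C}$, whence $v \neq 0$.

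The key step I would use is the obtuse-angle characterization of projections onto closed convex sets (e.g., \cite[Th.~3.16]{bauschke_convex_2017}): for every $\beta \in \mathcal{D} - \mathcal{C}$,
\begin{equation*}
    \langle \beta - v,\, 0 - v \rangle \leq 0 \quad \Longleftrightarrow \quad v^{\top} \beta \geq \|v\|^2.
\end{equation*}
Specializing $\beta = d - z$ with arbitrary $d \in \mathcal{D}$ and $z \in \mathcal{C}$ yields $v^{\top} d - v^{\top} z \geq \|v\|^2 > 0$.

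From here the conclusion follows by decoupling the two sides: taking the infimum over $d \in \mathcal{D}$ and the supremum over $z \in \mathcal{C}$ preserves the inequality, giving
\begin{equation*}
    \min_{d \in \mathcal{D}} v^{\top} d \;\geq\; \sup_{z \in \mathcal{C}} v^{\top} z + \|v\|^2 \;>\; \sup_{z \in \mathcal{C}} v^{\top} z,
\end{equation*}
where the minimum is attained because $\mathcal{D}$ is compact and $d \mapsto v^{\top} d$ is continuous. The strictness of the final inequality is precisely where infeasibility enters, through $\|v\|^2 > 0$.

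I do not anticipate a genuine obstacle here: the only subtle point is justifying that $\mathcal{D} - \mathcal{C}$ is closed (so that the projection is attained and the characterization is applicable), which is guaranteed by compactness of $\mathcal{D}$ as already invoked in the proof of \autoref{prop:infeasibility_governing_sequence}. Everything else is a direct application of the projection inequality and a separation of the min/sup over $d$ and $z$.
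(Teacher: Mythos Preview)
Your argument is correct, and it reaches the same quantitative conclusion as the paper, namely the gap of $\norm{v}^2$ between the two sides. The route is genuinely different, though. The paper does not work directly with the projection inequality on $\mc{D}-\mc{C}$; instead it invokes \autoref{prop:primal_shadow_infeasibility_properties} to obtain a best-approximation pair $(\overline{d},\overline{z}) \in \argmin_{d\in\mc{D},\,z\in\mc{C}}\norm{d-z}$, writes the first-order optimality conditions as normal-cone inclusions $-v\in\ncone_{\mc{D}}(\overline{d})$ and $v\in\ncone_{\mc{C}}(\overline{z})$, and reads off $\inf_{d\in\mc{D}}v^{\top}d=v^{\top}\overline{d}$ and $\sup_{z\in\mc{C}}v^{\top}z=v^{\top}\overline{z}$ as \emph{equalities} before subtracting. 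Your approach is more elementary and fully self-contained (it needs only \autoref{prop:infeasibility_governing_sequence} and the obtuse-angle criterion), whereas the paper's detour through \autoref{prop:primal_shadow_infeasibility_properties} buys a little extra: it identifies the extremizers with the algorithm's limit points $(\overline{d},\overline{z})$, tying the separating hyperplane back to the iterates of \autoref{alg:os-cmdp}.
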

\begin{proof}
    By \autoref{prop:primal_shadow_infeasibility_properties} we know that any limit point
    \( (\overline{d}, \overline{z}) \in \argmin_{d \in \mc{D}, z \in \mc{C}}\norm{d - z} \).
    The optimality conditions of the latter problem are:
    \begin{equation} \label{eq:normal_cone_conditions}
        \left\{ 
        \begin{aligned}
            & 0 \in \overline{d} - \overline{z} + \ncone_{\mc{D}}(\overline{d}) \\
            & 0 \in \overline{z} - \overline{d} + \ncone_{\mc{C}}(\overline{z})
        \end{aligned}
        \right. \iff
        \left\{
        \begin{aligned}
            - & v \in \ncone_{\mc{D}}(\overline{d}) \\
            & v \in \ncone_{\mc{C}}(\overline{z})
        \end{aligned}
        \right.,
    \end{equation}
    where the equivalence follows by definition of \( \overline{d} \) and \( \overline{z} \).
    The conditions \eqref{eq:normal_cone_conditions} imply that
    \begin{subequations}
        \begin{align}
            & \inf_{d \in \mc{D}} v^{\top} d = v^{\top} \overline{d} \\
            & \sup_{z \in \mc{C}} v^{\top} z = v^{\top} \overline{z}~.
        \end{align}
    \end{subequations}  
    Subtracting the two equations yields
    \begin{equation}
        \inf_{d \in \mc{D}} v^{\top} d - \sup_{z \in \mc{C}} v^{\top} z = v^{\top} (\overline{d} - \overline{z}) = \norm{v}^2,
    \end{equation}
    which implies that \( \inf_{d \in \mc{D}} v^{\top} d > \sup_{z \in \mc{C}} v^{\top} z \), 
    since \( v \neq 0 \) due to \( \mc{C} \cap \mc{D} = \emptyset \) and \autoref{prop:infeasibility_governing_sequence}.
    We conclude the proof by noting that \( \mc{D} \) is compact and, hence, the infimum is attained.
\end{proof}
\paragraph*{Proof of \autoref{prop:primal_shadow_infeasibility_properties}}
Combining \cite[Th.~3.4]{banjac2021minimal} with the fact that \( v_D = 0\) implies
that \( d_{k+1} - d_k \to 0\).
The sequences \( (d_k)_{k \in \setN}, (z_k)_{k \in \setN} \) are bounded 
by virtue of \cite[Cor.~5.1]{bauschke2023douglas}, 
thus their respective set of limit points is non-empty.
By the update rule \eqref{eq:dra_governing_update} it holds
\( \gs_k - \gs_{k+1} = d_k - z_k \) and, hence,
\( v = \overline{d} - \overline{z} \)
for any convergent subsequence of \( (d_k, z_k) \)
with limit \( (\overline{d}, \overline{z}) \).
Recalling that \( v = \argmin_{\beta \in \mc{D} - \mc{C}} \norm{\beta} \), we can readily observe
that \( (\overline{d}, \overline{z}) \in \argmin_{d \in \mc{D}, z \in \mc{C}} \norm{d - z} \)
which establishes the first claim.

For the second claim, i.e., for polyhedral \( \mc{C} \),
we will invoke \cite[Th.~6.1(i)]{bauschke2023douglas}, and so we 
verify that its assumptions hold in our setting.
First, in the proof of \autoref{prop:infeasibility_governing_sequence} we have shown that \( 0 \in \dom f^{\star} + g^{\star} \).
Additionally, we note that \cite[Example 6.1(i)]{bauschke2023douglas} guarantees 
that \( Z \neq \emptyset \), hence \( v \in \ran (\id - T) \),
where \( Z \) and \( T \) are defined in equations (14) and (3) in \cite{bauschke2023douglas}, respectively.

To show that \( \zer(\partial f + \partial g (\cdot - v)) \neq \emptyset \), we recall that
\( v \in \dom f - \dom g \), hence
\( 0 \in \dom f - (v + \dom g) \implies \dom f \cap \dom g(\cdot - v) \neq \emptyset \).
Thus, invoking \cite[Cor.~27.3(i),(ii)]{bauschke_convex_2017} (under its precondition
(c)) we obtain \( \zer(\partial f + \partial g (\cdot - v)) \neq \emptyset \),
since \( \argmin_x f(x) + g(x - v) \) is non-empty (because \( f + g \) is a proper, closed, convex
function with a compact domain \cite[Th.~11.10]{bauschke_convex_2017}).
We have established all the assumptions of \cite[Th.~6.1(i)]{bauschke2023douglas} and the
claim follows.
\( \hfill \square\)

\subsection{Inexact QRPI} \label{appendix:inexact_qrpi}
In this section, we provide numerical verification that the inexact version of QRPI with
\( \overline{\ell} = 2 \) performs similarly to the exact version.
For the exact version, we use \( \overline{\ell} = 100 \) to approximate an
infinite number of QRPI iterations.
We compare the performance of exact and inexact QRPI in terms of three metrics: objective value,
constraint violation, and compatibility with dynamics;
we recall that if we execute finitely many QRPI iterations
it is not guaranteed that \( d_k \in \mc{D} \), i.e.,
\( d_k \) might not be a valid occupancy measure that is compatible with the dynamics.

For the objective value, we use the relative suboptimality of the iterates,
namely \( (c^{\top} d_k - c^{\top} d^{\star})/\abs{c^{\top} d^{\star}} \)
where \( d^{\star} \) is an optimal solution of \eqref{eq:occupancy_cmdp}
computed to high accuracy.
For constraint violation, we use \( \max_i \{ \max(C_i(d_k) - b_i, 0)\} \).
To measure compatibility with dynamics, we use
\( \norm{\Xi^{\top} d - (1 -\gamma) \rho - \gamma P^{\top} d}_{\infty} \) to
quantify the violation of the equality constraint in \eqref{eq:occupancy_measure_set}.
Note that, non-negativity of \( d_k \) is satisfied by design due to the update \eqref{eq:inner_d_update}.
\begin{figure*}
    \centering
    \hspace*{0.04\textwidth}
    \begin{subfigure}{0.45\textwidth}
        \centering
        \includegraphics[width=\textwidth]{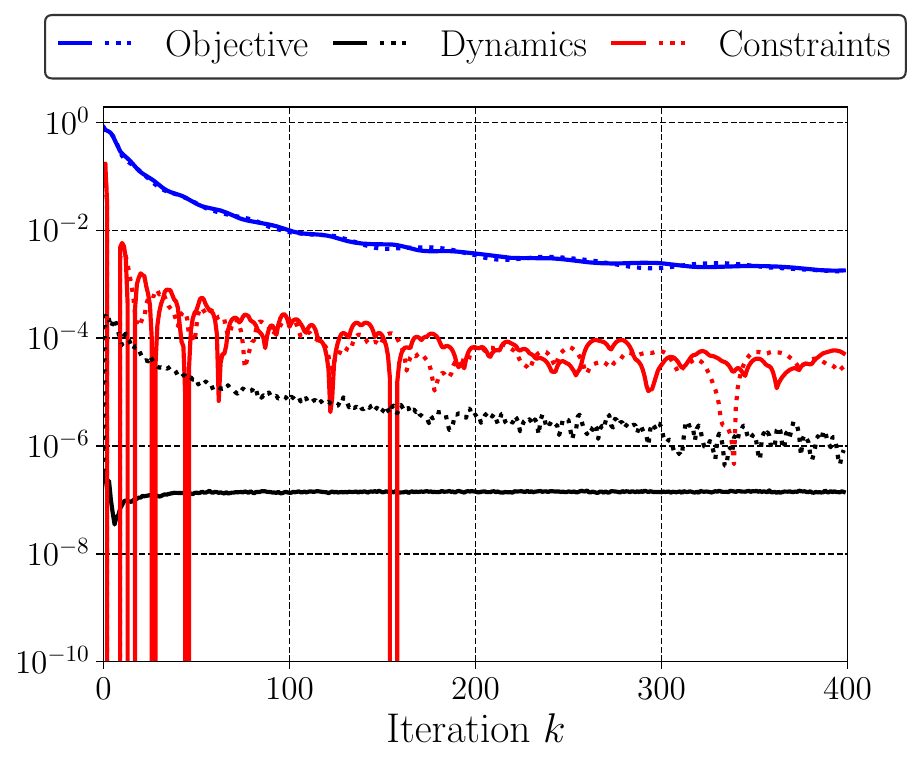}
        \caption{Garnet}
        \label{subfig:inner_comparison_garnet}
    \end{subfigure}
    \begin{subfigure}{0.45\textwidth}
        \centering
        \includegraphics[width=\textwidth]{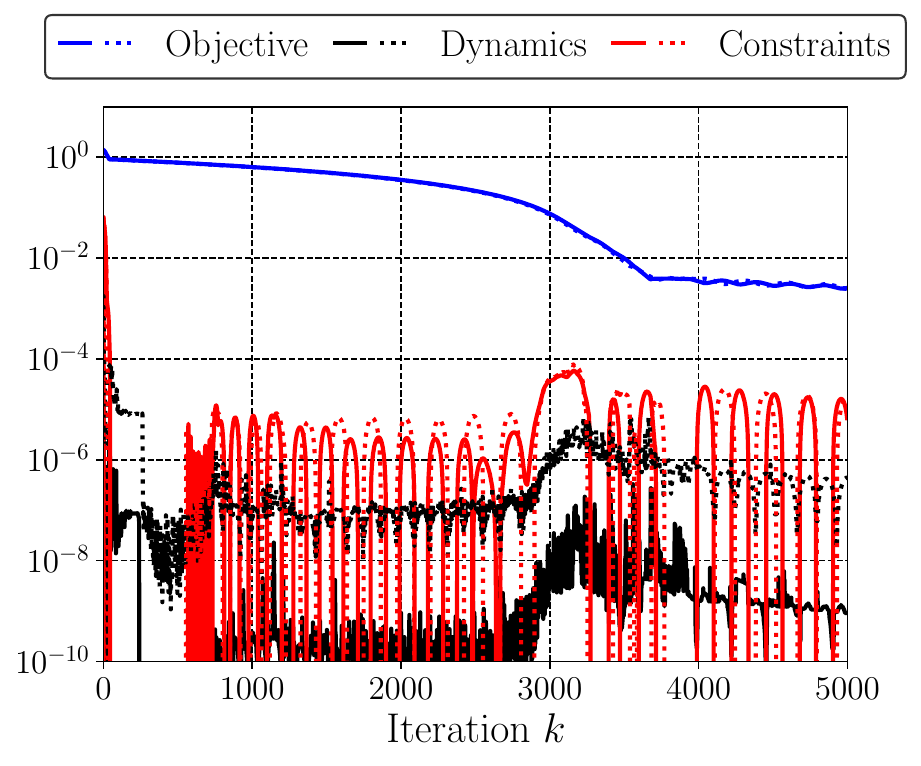}
        \caption{Grid world}
        \label{subfig:inner_comparison_grid_world}
    \end{subfigure}
    \hspace*{0.04\textwidth}
    \caption{Performance comparison of \autoref{alg:os-cmdp} with exact (continuous line) and inexact (dotted line) QRPI in the inner loop.}
    \label{fig:inner_comparison}
\end{figure*}

In Figures \ref{subfig:inner_comparison_garnet} and \ref{subfig:inner_comparison_grid_world}
we plot the three metrics as a function of the iteration \( k \) for an instance of the linearly-constrained Garnet problem
and the grid world, respectively, as introduced in \autoref{sec:numerical_simulations}.
The continuous line corresponds to the exact variant while the dotted line indicates the inexact one.

We observe that the performance of exact and inexact QRPI closely match in terms of objective
and constraint violation.
In terms of dynamics, the exact version consistently attains a high compatibility, whereas
the inexact variant is initially incompatible but gradually improves.
We stress that this transient incompatibility is a minor issue, since we are only
interested that \( d_k \in \mc{D} \) for the final iterate.
Practically,
once the termination criteria are met, one can always run an additional iteration
of \autoref{alg:os-cmdp} and execute enough QRPI iterations such
that \( d_k \in \mc{D} \) up to the desired accuracy.
In our implementation we have included this safeguard.

\subsection{Implementation of PDM} \label{subsubsec:implementation_pdm}
In the case of linear constraints \( \mc{C} = \{ d \in \setR^{SA} \,|\, \lcm d \leq b \} \)
we can solve \eqref{eq:occupancy_cmdp} via primal-dual schemes that exploit
the structure of the Lagrangian:
\begin{align*}
    \mc{L}(d; \lambda) & = c^{\top} d + \indic{\mc{D}}(d) + \lambda^{\top}(\lcm d - b) \\
    & = (c + \lcm^{\top} \lambda)^{\top} d + \indic{\mc{D}}(d) -\lambda^{\top} b~.
\end{align*}
Then, we seek a solution of the saddle-point problem:
\begin{equation*}
    \max_{\lambda \geq 0}~ \min_{d \in \mc{D}} ~ (c + \lcm^{\top} \lambda)^{\top} d -\lambda^{\top} b~.
\end{equation*}
Exploiting the fact that the inner minimization is an unconstrained MDP with cost \( c+\lcm^{\top} \lambda\),
we can deploy a dual ascent scheme.
Specifically, let \( \lambda_0 \in \setRp^{n_c} \) and iterate:
\begin{subequations} \label{eq:dual_ascent}
    \begin{empheq}[left = (\forall k \in \setN) \quad \empheqlfloor]{align}
    d_k & \leftarrow \argmin_{d \in \mc{D}} (c + \lcm^{\top} \lambda_k)^{\top} d
    \label{eq:dual_ascent_modified_mdp} \\
    \lambda_{k+1} & \leftarrow \max(\lambda_k + \alpha_k (\lcm d_k - b), 0)
    \label{eq:dual_ascent_projected_ascent}
    \end{empheq}
\end{subequations}
where \( \alpha_k > 0 \) is a step-size that satisfies
\(\alpha_k \to 0 \) and \( \sum_{k=0}^{\infty} \alpha_k = \infty \).
For our benchmarks, we tested different step sizes and we used the one that achieved fastest convergence,
specifically \( \alpha_k = 10/(k+1) \).
By virtue of \cite[Th.~6]{anstreicher2009two}, the sequence \( (\frac{1}{k}\sum_{i=0}^{k} d_i, \lambda_k)_{k \in \setN} \) converges to 
primal-dual solution of \eqref{eq:occupancy_cmdp}.
The unconstrained MDP \eqref{eq:dual_ascent_modified_mdp} can be solved using any DP technique.
In our implementation, we use policy iteration.
To ensure a fair comparison with OS-CMDP, we only run a finite number of policy iterations and, specifically we found
that 2 iterations produce a good tradeoff between computational performance and accuracy.
Further, similarly to OS-CMDP, we warm-start policy iteration for \eqref{eq:dual_ascent_modified_mdp} at iteration \(k\)
using the approximately optimal value function computed at \(k-1\).
We terminate \eqref{eq:dual_ascent} when \(\norm{\lambda_{k+1} - \lambda_k}_{\infty} \leq 10^{-4} \)
and \( \max(a_i^{\top} d_{k} - b_i, 0) \leq 10^{-4}(1 + \abs{b_i}) \) for all \( i = 1, \ldots, N_c \).

\bibliographystyle{ieeetr}
\bibliography{references}

\end{document}